\numberwithin{equation}{section}
\newcommand{\bfV}{\boldsymbol{V}}
\newcommand{\bfI}{\boldsymbol{I}}
\newcommand{\tn}{|||}
\newcommand{\mcK}{\mathcal{K}}
\newcommand{\mcN}{\mathcal{N}}
\newcommand{\Gammah}{{\Gamma_h}}
\newcommand{\Gammahl}{{\Gamma_h^l}}
\newcommand{\nablas}{\nabla_\Gamma}
\newcommand{\nablash}{\nabla_{\Gamma_h}}
\newcommand{\divs}{\text{div}_\Gamma}
\newcommand{\Ps}{P_\Gamma}
\newcommand{\Psh}{P_{\Gammah}}
\newcommand{\Qs}{Q_\Gamma}
\newcommand{\Qsh}{Q_{\Gammah}}
\newcommand{\nuh}{\nu_{\partial \Gammahl}}
\newcommand{\nuhh}{\nu_{\partial \Gammah}}
\newcommand{\tth}{t_{\partial \Gammah}}
\newcommand{\tthl}{t_{\partial \Gammahl}}
\newcommand{\muh}{|B|}
\newcommand{\muhh}{|B_{\partial \Gammah}|}
\newcommand{\tp}{\widetilde{p}_{\partial \Gamma}}
\newcommand{\pb}{p_{\partial \Gamma}}
\newcommand{\gammah}{\gamma_\Gammah}
\newcommand{\gammahl}{\gamma_\Gammahl}
\newcommand{\Gammahlout}{\Gamma_{h,\text{out}}^l}
\newcommand{\IR}{\mathbb{R}}
\newtheorem{lem}{Lemma}[section]
\newtheorem{thm}{Theorem}[section]
\newtheorem{rem}{Remark}[section]
\newenvironment{proof}{\noindent \newline {\bf Proof.}}
{\hfill \mbox{\fbox{} } \newline}
\title{\bf Finite Element Approximation of the Laplace-Beltrami 
Operator on a Surface with Boundary}
\date{\today}
\author{Erik Burman\footnote{ Department of Mathematics, University College London, London, UK-WC1E  6BT, United Kingdom},
\mbox{ } Peter Hansbo\footnote{Department of Mechanical Engineering, J\"onk\"oping University,
SE-55111 J\"onk\"oping, Sweden}
\mbox{ } 
Mats G. Larson\footnote{Department of Mathematics and Mathematical Statistics, Ume{\aa} University, SE-90187 Ume{\aa}, Sweden}
\\
Karl Larsson\footnote{Department of Mathematics and Mathematical Statistics, Ume{\aa} University, SE-90187 Ume{\aa}, Sweden}
\mbox{ }
Andr\'e Massing\footnote{Department of Mathematics and Mathematical Statistics, Ume{\aa} University, SE-90187 Ume{\aa}, Sweden}
}
\begin{document}

\maketitle

\begin{abstract} 
\noindent
We develop a finite element method for the Laplace-Beltrami operator
on a surface with boundary and nonhomogeneous Dirichlet boundary
conditions. The method is based on a triangulation of the surface and
the boundary conditions are enforced weakly using Nitsche's method. We
prove optimal order a priori error estimates for piecewise continuous
polynomials of order $k \geq 1$ in the energy and $L^2$ norms that
take the approximation of the surface and the boundary into account.
\end{abstract}

\paragraph{Subject Classification Codes:} 65M60, 65M85.
\paragraph{Keywords:} Laplace-Beltrami operator, surface with boundary, Nitsche's method, a priori error estimates.

\section{Introduction}

Finite element methods for problems on surfaces have been 
rapidly developed starting with the seminal work of Dziuk 
\cite{Dz88}. Different approaches have been developed 
including methods based on meshed surfaces, \cite{AnDeMa15}, 
\cite{DeMaSt13}, \cite{De09}, \cite{LaLa15}, and methods 
based on implicit or embedded approaches, \cite{BuHaLa15}, 
\cite{OlReGr09}, \cite{OlReXu14},
see also the 
overview articles \cite{DzEl13} and \cite{BuClHa15}, and the 
references therein. So far the theoretical developments are, 
however, restricted to surfaces without boundary. 
 
In this contribution we develop a finite element method for 
the Laplace-Beltrami operator on a surface which has a 
boundary equipped with a nonhomogeneous Dirichlet boundary 
condition. The results may be readily extended to include 
Neumann conditions on part of the boundary, which we also 
comment on in a remark. The method is based on a triangulation 
of the surface together with a Nitsche formulation \cite{Ni72} 
for the Dirichlet boundary condition. Polynomials of order 
$k$ are used both in the interpolation of the surface and 
in the finite element space. Our theoretical approach 
is related to the recent work \cite{BuHaLa15c} where a 
priori error estimates for a Nitsche method with so 
called boundary value correction \cite{BrDuTh72} is developed 
for the Dirichlet problem on a (flat) domain in $\IR^n$. We 
also mention the work 
\cite{Sc75} where the smooth curved boundary of a domain in 
$\IR^2$ is interpolated and Dirichlet boundary conditions 
are strongly enforced in the nodes.

Provided the error in the position of the approximate 
surface and its boundary is (pointwise) of order $k+1$ and 
the error in the normals/tangents is of order $k$, we prove 
optimal order error estimates in the $L^2$ and energy norms. 
No additional regularity of the exact solution, compared to 
standard estimates, is required. The proof is based on a 
Strang lemma which accounts for the error 
caused by approximation of the solution, the surface, and the 
boundary. Here the discrete surface is mapped using a closest 
point mapping onto a surface containing the exact surface. 
The error caused by the boundary approximation is then handled 
using a consistency argument. 
Special care is required to obtain optimal order $L^2$ error estimates
and a refined Aubin-Nitsche duality argument is used which exploits
the fact that the dual problem is small close to the boundary 
since the dual problem is equipped with a homogeneous Dirichlet 
condition.

The outline of the paper is as follows: In Section 2 we formulate 
the model problem and finite element method. We also formulate the 
precise assumptions on the approximation of the surface and 
its boundary. In Section 3 we develop the necessary results to 
prove our main error estimates. In Section 4 we present numerical 
results confirming our theoretical findings.

\section{Model Problem and Method}

\subsection{The Surface}

Let, $\Gamma \subset \Gamma_0$ be a surface with smooth boundary 
$\partial \Gamma$, where $\Gamma_0$ is a smooth closed connected hypersurface embedded in $\IR^3$.
We let $n$ be the exterior unit normal to $\Gamma_0$
and $\nu$ be the exterior unit conormal to $\partial \Gamma$, 
i.e. $\nu(x)$ is orthogonal both to the tangent vector of 
$\partial \Gamma$ at $x$ and the normal $n(x)$ of $\Gamma_0$.
For $\Gamma_0$, we denote its associated signed distance function by $\rho$ 
which satisfies $\nabla \rho = n$, and we define 
an open tubular neighborhood of $\Gamma_0$ by
$U_{\delta} (\Gamma_0) = \{ x \in \IR^3 : |\rho(x)|< \delta\}$
with $\delta>0$.
Then there is $\delta_{0,\Gamma_0}>0$ such that the closest point mapping $p:U_{\delta_{0,\Gamma_0}}(\Gamma_0) \rightarrow \Gamma_0$ assigns precisely 
one point on $\Gamma_0$ to each point in 
$U_{\delta_{0,\Gamma_0}}(\Gamma_0)$. The closest point mapping 
takes the form 
\begin{equation}
p:U_{\delta_{0,\Gamma_0}}(\Gamma_0) \ni x \mapsto x - \rho(x) n \circ p(x) \in \Gamma_0
\end{equation}   
For the boundary curve $\partial \Gamma$,
let $\rho_{\partial \Gamma}$ be the distance function to the 
curve $\partial \Gamma$, 
and $p_{\partial \Gamma}$ be the associated closest 
point mapping giving raise to the
tubular neighborhood $U_{\delta}(\partial \Gamma) = \{x\in \IR^3 \,:\, |\rho_{\partial \Gamma}(x)| < \delta\}$.
Note that there is $\delta_{0,\partial \Gamma}>0$ such that the
closest point mapping $p_{\partial \Gamma}:U_{\delta_{0,\partial
\Gamma}}(\partial \Gamma) \rightarrow \partial \Gamma$ is well
defined. Finally, we let $\delta_0 = \min (\delta_{0,\Gamma_0},\delta_{0,\partial \Gamma})$ and introduce
 $U_{\delta_0}(\Gamma) = \{x\in \IR^3 \,:\, |\rho(x)|\lesssim \delta_0\}$.

\begin{rem} Clearly we may take $\Gamma_0$ to be a surface that is 
only slightly larger than $\Gamma$ but for simplicity we have taken 
$\Gamma_0$ closed in order to obtain a well defined closest point 
mapping without boundary effects in a convenient way.
\end{rem}

\begin{rem} Our theoretical developments covers a smooth 
orientable hypersurface with smooth boundary in $\IR^n$, 
also for $n >3$.
\end{rem}

\subsection{The Problem}

\paragraph{Tangential Calculus.}
For each $x \in \Gamma_0$ let $T_x(\Gamma_0) 
= \{y\in \IR^3 : (y,n(x))_{\IR^3} = 0\}$ and  
$N_x(\Gamma) = \{y\in \IR^3 : \alpha n(x)), \alpha \in \IR\}$ 
be the tangent and normal spaces equipped with the inner products 
$(v,w)_{T_x(\Gamma_0)} = (v,w)_{\IR^3}$ and $(v,w)_{N_x(\Gamma_0)} 
= (v,w)_{\IR^3}$.
Let $\Ps:\IR^3 \rightarrow T_x(\Gamma_0)$ be the projection of $\IR^3$
onto the tangent space given by $\Ps = I - n \otimes n$ and let
$\Qs:\IR^3 \rightarrow N_x(\Gamma_0)$ be the orthogonal projection onto the normal space
given by $\Qs= I - \Ps = n \otimes n$.
The tangent gradient is defined by $\nablas v = \Ps \nabla v$.
For a tangential vector field $w$, i.e. a mapping $w:\Gamma_0 \ni x \mapsto
w(x) \in T_x(\Gamma_0)$,
the divergence is defined
by $\divs w = \text{tr}(w\otimes \nablas)$. 
Then the Laplace-Beltrami operator 
is given by $\Delta_\Gamma v  = \divs \nablas v$. Note that we have Green's formula 
\begin{equation}\label{eq:greens}
(-\Delta_\Gamma v,w)_{} = (\nablas v, \nablas w)_{\Gamma} 
- (\nu\cdot \nablas v, w)_{\partial \Gamma} 
\end{equation}
   
\paragraph{Model Problem.} Find $u:\Gamma \rightarrow \IR$ 
such that
\begin{alignat}{2}\label{eq:poissoninterior}
-\Delta_\Gamma u &= f \qquad 
&& \text{in $\Gamma$}
\\ \label{eq:poissonbc}
u &= g \qquad && \text{on $\partial\Gamma$}
\end{alignat}
where $f\in H^{-1}(\Gamma)$ and $g\in H^{1/2}(\partial \Gamma)$ 
are given data. Thanks to the Lax-Milgram theorem, there is a unique solution $u\in H^1(\Gamma)$ to 
this problem. Moreover, we have the elliptic regularity estimate

\begin{equation}\label{eq:ellipticregularity}
\|u\|_{H^{s+2}(\Gamma)} \lesssim \|f\|_{H^s(\Gamma)} 
+ \|g\|_{H^{s+3/2}(\Gamma)}, \qquad 
s \geq -1
\end{equation} 
since $\Gamma$ and $\partial \Gamma$ are smooth. Here and below 
we use the notation $\lesssim$ to denote less or equal up to a 
constant. We also adopt the standard notation $H^s(\omega)$ for 
the Sobolev space of order $s$ on $\omega\subset \Gamma_0$ with 
norm $\|\cdot\|_{H^s(\omega)}$. For $s=0$ we use the notation 
$L^2(\omega)$ with norm $\|\cdot\|_\omega$, see \cite{WlRoLa95} 
for a detailed description of Sobolev spaces on smooth manifolds with boundary.

\subsection{The Discrete Surface and Finite Element Spaces}

To formulate our finite element method for the boundary
value problem~\eqref{eq:poissoninterior}--\eqref{eq:poissonbc} in 
the next section, we here summarize our assumptions on the 
approximation quality of the discretization of $\Gamma$. 

\paragraph{Discrete Surface.}
Let 
$\{\Gamma_h,\, h \in (0,h_0]\}$ be a family of connected 
triangular surfaces with mesh parameter $h$ that approximates 
$\Gamma$ and let $\mcK_h$ be the mesh associated with $\Gammah$. 
For each element $K\in \mcK$, there is a bijection  
$F_K: \widehat{K} \rightarrow K$ such that $F_K \in 
[\widehat{V}_k]^3 = [P_k(\widehat{K})]^3$, where $\widehat{K}$ is a 
reference triangle in $\IR^2$ and $P_k(\widehat{K})$ is the space 
of polynomials of order less or equal to $k$. We assume that 
the mesh is quasi-uniform. 
{For each $K \in \mcK_h$, we let $n_h\vert_K$ be the unit normal to $\Gamma_h$, oriented such that 
$(n_h, n\circ p)_{\IR^3}>0$. On the element edges forming 
$\partial \Gamma_h$, we define $\nu_{\partial \Gamma_h}$ to be the exterior 
unit conormal to $\partial \Gamma_h$, i.e. $\nu_{\partial \Gamma_h}(x)$ 
is orthogonal both to the tangent vector of $\partial \Gamma_h$ at $x$ 
and the normal $n_h(x)$ of $\Gamma_h$.}
We also introduce the tangent projection $\Psh = I - n_h \otimes n_h$ and the normal projection $\Qsh = n_h \otimes n_h$, associated with $\Gammah$.

\paragraph{Geometric Approximation Property.} 
We assume that 
$\{\Gamma_h, h \in (0,h_0]\}$ approximate 
$\Gamma$ in the following way: for all $h\in (0,h_0]$ it holds
\begin{align}\label{eq:geomassum-a}
&\Gamma_h \subset U_{\delta_0}(\Gamma)
\\ \label{eq:geomassum-b}
&\partial \Gamma_h \subset U_{\delta_0}(\partial \Gamma)
\\ \label{eq:geomassum-c}
&\|\rho_\Gamma \|_{L^\infty(\Gammah)} \lesssim h^{k+1}
\\ \label{eq:geomassum-d}
&\|n \circ p_\Gamma - n_h\|_{L^\infty(\Gammah)} \lesssim h^k
\\ \label{eq:geomassum-e}
&\|\rho_{\partial \Gamma} \|_{L^\infty(\partial \Gammah)} \lesssim h^{k+1}
\\ \label{eq:geomassum-f}
&\|\nu \circ p_{\partial \Gamma} - \textcolor{black}{\nu_{\Gamma_h}}\|_{L^\infty(\Gammah)} \lesssim h^k
\end{align}
Note that it follows that we also have the estimate 
\begin{equation}\label{eq:tangentest}
\|t_{\partial \Gamma}\circ p_{\partial \Gamma} 
- t_{\partial \Gammah}\|_{L^\infty(\partial \Gammah)} 
\lesssim 
h^{k} 
\end{equation}
for the unit tangent vectors $t_{\partial \Gamma}$ and 
$t_{\partial \Gammah}$ of $\partial\Gamma$ and $\partial \Gammah$.

\paragraph{Finite Element Spaces.} Let $V_h$ be the space of parametric continuous piecewise 
polynomials of order $k$ defined on $\mcK_h$, i.e.
\begin{equation}
V_h = \{ v \in C(\Gammah,\IR)\,:\, v|_K \in \widehat{V}_k \circ F_K^{-1}\}  
\end{equation}
where $\widehat{V}_k = P_k(\widehat{K})$ is the space of polynomials of order less or equal 
to $k$ defined on the reference triangle $\widehat{K}$ defined above.

\subsection{The Finite Element Method}

The finite element method for the boundary value 
problem~\eqref{eq:poissoninterior}--\eqref{eq:poissonbc} 
takes the form: find $u_h \in V_h$ such that 
\begin{equation}\label{eq:fem}
a_{\Gammah}(u_h, v) = l_{\Gammah}(v),\qquad \forall v \in V_h
\end{equation}
where 
\begin{align}\label{eq:ahh}
a_{\Gammah}(v,w) &= (\nablash v,\nablash w)_{\Gamma_h} 
\\ \nonumber
&\qquad - (\nuhh \cdot \nablash v,w)_{\partial \Gamma_h}
-  (v, \nuhh \cdot \nablash w)_{\partial \Gamma_h}
\\ \nonumber
&\qquad + \beta h^{-1}(v,w)_{\partial \Gamma_h} 
\\ \label{eq:lhh}
l_{\Gammah}(w) &= (f\circ p ,w)_{\Gamma_h} 
- (g \circ p_{\partial \Gamma} ,\nuhh \cdot \nablash w)_{\partial \Gamma_h} 
+ \beta h^{-1}(g\circ p_{\partial \Gamma}, w)_{\partial \Gamma_h}                        
\end{align}
Here $\beta>0$ is a parameter, and $f$ is extended from $\Gamma$ 
to $\Gamma \cup p(\Gamma_h) \subset \Gamma_0$ in such a way that 
$f \in H^m(\Gamma \cup p(\Gammah))$ and
\begin{equation}\label{eq:extension-f}
\|f \|_{H^m(\Gamma \cup p(\Gammah))} \lesssim \| f \|_{H^m(\Gamma)}
\end{equation}
where $m=0$ for $k=1$ and $m=1$ for $k\geq 2$.

\begin{rem} Note that in order to prove optimal a priori error estimates 
for piecewise polynomials of order $k$ we require $u\in H^{k+1}(\Gamma)$ 
and thus $f\in H^{k-1}(\Gamma)$. For $k=1$ we have $f\in L^2(\Gamma)$ and 
for $k\geq 2$ we require $f\in H^{k-1}(\Gamma)\subseteq H^1(\Gamma)$. 
Thus we conclude that (\ref{eq:extension-f}) does not require any additional regularity compared to the standard situation. We will also 
see in Section 
\ref{sec:extension} below that there indeed exists extensions of functions that preserve regularity.
\end{rem}


\section{A Priori Error Estimates}

We derive a priori error estimates that take both the approximation 
of the geometry and the solution into account. The main new feature 
is that our analysis also takes the approximation of the boundary 
into account. 


\subsection{Lifting and Extension of Functions}

We collect some basic facts about lifting and extensions of 
functions, their derivatives, and related change of variable 
formulas, see for instance \cite{BuHaLa15}, \cite{De09}, and 
\cite{Dz88}, for further details.  

\begin{itemize}
\item For each function $v$ defined on $\Gamma_0$ we define the 
extension 
\begin{equation}
v^e = v \circ p 
\end{equation}  
to $U_{\delta_{\Gamma_0}}(\Gamma_0)$. For each function $v$ defined 
on $\Gammah$ we define the lift to $\Gammahl = p(\Gammah)\subset 
\Gamma_0$ 
by 
\begin{equation}
v^l \circ p  = v
\end{equation} 
Here and below we use the notation $\omega^l = p(\omega)\subset 
\Gamma_0$ for any subset $\omega \subset \Gammah$.

\item 
The derivative $dp:T_x(\Gammah) \rightarrow T_{p(x)}(\Gamma)$ 
of the closest point mapping $p:\Gammah \rightarrow 
\Gamma_0$ is given by
\begin{equation}\label{eq:dp}
dp(x) = P_{\Gamma}(p(x)) P_{\Gammah}(x) + \rho(x) \mathcal{H}(x)P_{\Gammah}(x)
\end{equation}
where $T_x(\Gamma)$ and $T_{p(x)}(\Gammah)$ are the tangent spaces to 
$\Gamma$ at $x \in \Gamma$ and to $\Gammah$ at $p(x)\in \Gammah$, respectively.
Furthermore,
$\mathcal{H}(x) = \nabla \otimes \nabla \rho(x)$ is the $\Gamma$ 
tangential curvature tensor which satisfies the estimate 
$\|\mathcal{H}\|_{L^\infty(U_{\delta}(\Gamma_0))} 
\lesssim 1$, for some small 
enough $\delta >0$, see \cite{GiTr01} for further details. We use $B$ 
to denote a matrix representation of the operator $dp$ with respect to 
an arbitrary choice of orthonormal bases in $T_x(\Gammah)$ and 
$T_{p(x)}(\Gamma)$.

\item Gradients of extensions and lifts are given by
\begin{equation}\label{eq:gradientidentities}
\nablash v^e = B^T \nablas v, \qquad \nablas v^l = B^{-T} \nablash v
\end{equation}
where the gradients are represented as column vectors and 
the transpose $B^T:T_{p(x)}(\Gamma)\rightarrow T_x(\Gamma_0)$ 
is defined by $(B v,w)_{T_{p(x)}(\Gamma)} 
= (v, B^T  w)_{T_x(\Gamma_0)}$, for all 
$v\in T_x(\Gammah)$ and $w\in T_{p(x)}(\Gamma)$. 

\item We have the following estimates 
\begin{equation}\label{eq:Bbounds}
\|B\|_{L^\infty(\Gamma_h)} \lesssim 1, 
\qquad 
\|B^{-1}\|_{L^\infty(\Gamma)} \lesssim 1
\end{equation}

\item We have the change of variables formulas 
\begin{equation}
\int_{\omega^l} g^l d\Gamma 
=  \int_{\omega} g |B|d\Gamma_h
\end{equation}
for a subset $\omega\subset \Gammah$, and 
\begin{equation}
\int_{\gamma^l} g^l d\Gamma 
=  \int_{\gamma} g |B_{\partial\Gammah}| 
d\Gamma_h
\end{equation}
for a subset $\gamma \subset \partial \Gamma_h$. 
Here $|B|$ denotes the absolute value of the determinant 
of $B$ (recall that we are using orthonormal bases in the 
tangent spaces) and $|B_{\partial \Gamma_h}|$ denotes 
the norm of the restriction $B_{\partial \Gammah}: T_x(\partial \Gammah) \rightarrow 
T_{p(x)}(\partial \Gammahl)$ of $B$ to the one dimensional tangent 
space of the boundary curve. We then have 
the estimates
\begin{equation}\label{eq:measurebound}
|\, |B|-1\,| \lesssim h^{k+1}, 
\qquad | \, |B^{-1}| - 1 \,| \lesssim h^{k+1}
\end{equation}
and 
\begin{equation}\label{eq:measureboundarybound}
|\, |B_{\partial \Gamma_h}| -1\,| \lesssim h^{k+1}, 
\qquad | \, 
|B^{-1}_{\partial \Gamma_h}| - 1 \,| \lesssim h^{k+1}
\end{equation}
Estimate (\ref{eq:measurebound}) appear in several papers, 
see for instance \cite{De09}. Estimate 
(\ref{eq:measureboundarybound}) is less common but 
appears in papers on discontinuous Galerkin methods on surfaces, 
see \cite{BuHaLaMa15}, \cite{DeMaSt13}, and \cite{LaLa15}. For 
completeness we include a simple proof of 
(\ref{eq:measureboundarybound}).
\paragraph{Verification of (\ref{eq:measureboundarybound}).}  
Let $\gammah:[0,a) \rightarrow \partial \Gammah \subset \IR^3$
be a parametrization of the curve $\partial \Gammah$ 
in $\IR^3$, with $a$ some positive real number. Then 
$p \circ \gammah(t)$, $t\in [0,a)$, is a parametrization of 
$\partial \Gammahl$. We have
\begin{equation}
|d_t \gammahl |_{\IR^3} = |d_t p \circ \gammah |_{\IR^3} 
= |dp d_t \gammah|_{\IR^3} 
= \muhh |d_t \gammah |_{\IR^3}
\end{equation}
and 
\begin{align}
|dp d_t \gammah|_{\IR^3} - |d_t \gammah|_{\IR^3}
&=|(\Ps + \rho \mathcal{H}) d_t \gammah |_{\IR^3} 
- |d_t \gammah|_{\IR^3}
\\
&= \underbrace{|\Ps d_t \gammah |_{\IR^3} - |d_t \gammah|_{\IR^3}}_{\bigstar = O(h^{2k})} + O(h^{k+1})
\end{align}
Here we estimated $\bigstar$ by first using the identity 
\begin{align}
| \Ps d_t \gammah|^2 &= | d_t \gammah  -\Qs d_t \gammah |^2 
\\
&= |d_t \gammah |^2 - 2 d_t \gammah \cdot \Qs d_t \gammah 
+ |\Qs  d_t \gammah|^2   
\\
&= |d_t \gammah |^2 - |\Qs  d_t \gammah|^2 
\\
&\geq ( 1- C h^{2k} ) | d_t \gammah|^2
\end{align}
and then using the estimate 
$|(1 + \delta)^{1/2} - 1|\lesssim |\delta|$, for $-1 \leq \delta$, 
to conclude that 
\begin{equation}
\Big| | \Ps d_t \gammah | - | d_t \gammah | \Big|
\lesssim h^{2k} |d_t\gammah | 
\end{equation}

%

\item The following equivalences of norms hold (uniformly in $h$)
\begin{equation}\label{eq:equivalence-H1-ext}
\| v \|_{H^m(\Gammahl)} \sim \| v^e \|_{H^m(\Gammah)},\qquad m=0,1, 
\qquad v \in H^m(\Gamma)
\end{equation}
\begin{equation}\label{eq:equivalence-H1-lift}
\| v^l \|_{H^m(\Gammahl)} \sim \| v\|_{H^m(\Gammah)},\qquad m=0,1, 
\qquad v \in H^m(\Gammah)
\end{equation}
These estimates follow from the identities for the gradients (\ref{eq:gradientidentities}), the uniform bounds (\ref{eq:Bbounds}) 
of $B$, and the bounds (\ref{eq:measurebound}) for the determinant $|B|$. 
\end{itemize}

\subsection{Norms}
We define the norms
\begin{equation}\label{eq:energynormh}
\tn v \tn^2_{\Gammah} 
= \| \nablash v \|^2_{\Gammah}
+ \tn v \tn^2_{\partial \Gammah},
\qquad
\tn v \tn^2_{\partial \Gammah}
= h \| \nuhh \cdot \nablash v \|^2_{\partial \Gammah}
+ h^{-1} \| v \|^2_{\partial \Gammah}
\end{equation}
\begin{equation}\label{eq:energynormhl}
\tn v \tn^2_{\Gammahl} 
= \| \nablas v \|^2_{\Gammahl}
+ \tn v \tn^2_{\partial \Gammahl},
\qquad \tn v \tn^2_{\partial \Gammahl}
= h \| \nuh \cdot \nablas v \|^2_{\partial \Gammahl}
+ h^{-1} \| v \|^2_{\partial \Gammahl}
\end{equation}
Here $\nuh$ denotes the unit exterior conormal to 
$\partial \Gammahl$; that is, $\nuh$ is a tangent 
vector to $\Gamma_0$, which is orthogonal to the curve $\Gammahl$ 
and exterior to $\Gammahl$. Then the following equivalences hold
\begin{equation}\label{eq:equivalence-energy-lift}
\tn v^l \tn_\Gammahl \sim \tn v \tn_\Gammah, 
\qquad \tn v^l \tn_{\partial\Gammahl} 
\sim \tn v \tn_{\partial\Gammah},
\qquad
v \in V(\Gammah)
\end{equation}
\begin{equation}\label{eq:equivalence-energy-ext}
\tn v \tn_\Gammahl \sim \tn v^e \tn_\Gammah, 
\qquad \tn v \tn_{\partial\Gammahl} 
\sim \tn v^e \tn_{\partial\Gammah},
\qquad
v \in V(\Gammahl)
\end{equation}
Here  
$V(\Gammah)=\{v\in C(\Gammah)\,:\, v|_K \in H^{3/2}(K), K \in \mcK_h\}$ 
and $V(\Gammahl) = V^l(\Gammah)$. Note that $V_h \subset V(\Gammah)$. 
\begin{rem}
We will see that it is convenient to have access to the 
norms $\tn \cdot \tn_{\partial \Gammah}$ and 
$\tn \cdot \tn_{\partial \Gammahl}$, involving the boundary terms 
since that allows us to take advantage of stronger control of 
the solution to the dual problem, which is used in the proof of 
the $L^2$ error estimate, see Theorem \ref{thm:L2}, in the vicinity 
of the boundary.  
\end{rem}

\paragraph{Verification of (\ref{eq:equivalence-energy-lift}).} In view of (\ref{eq:equivalence-H1-lift}) it is enough to verify the equivalence
$\tn v^l \tn_{\partial\Gammahl} 
\sim \tn v \tn_{\partial\Gammah}$, 
between the boundary norms.
First we have using a change of domain of integration from 
$\partial \Gammahl$ to $\partial \Gammah$ and the bound (\ref{eq:measureboundarybound}),
\begin{equation}\label{eq:equiv-ver-a}
h^{-1} \|v^l\|^2_{\partial \Gammahl} 
=   
h^{-1} ( v^l, v^l)_{\partial \Gammahl}
= 
h^{-1} ( v, v \muhh )_{\partial \Gammah}
\sim 
h^{-1} \|v\|^2_{\partial \Gammah}
\end{equation}
Next we have the identity 
\begin{equation}
\nuh \cdot \nablas v^l = \nuh \cdot B^{-T}\nablash v 
= B^{-1} \nuh \cdot \nablash v
\end{equation}
and thus using the uniform boundedness of $B^{-1}$ 
we obtain by changing domain of integration 
from $\partial\Gammahl$ to $\partial \Gammah$, 
using (\ref{eq:measureboundarybound}), and then 
splitting $\nablash v$ into components normal 
and tangent to $\partial \Gammah$, 
\begin{align}
\|\nuh \cdot \nablas v^l\|^2_{\partial \Gammahl} 
&\lesssim 
\|\nablash v\|^2_{\partial \Gammah} 
\\
&=
\|\nuhh \cdot \nablash v\|^2_{\partial \Gammah} 
+ 
\|\tth \cdot \nablash v\|^2_{\partial \Gammah}
\\
&\lesssim 
\|\nuhh \cdot \nablash v\|^2_{\partial \Gammah} 
+ 
h^{-2}\|v\|^2_{\partial \Gammah}
\\
&\lesssim 
h^{-1}\tn v \tn^2_{\partial \Gammah} 
\end{align}
where $\tth$ is the tangent vector to $\partial \Gammah$ 
and finally used an inverse estimate to bound the tangent 
derivative. Multiplying by $h$ we thus have
\begin{equation}\label{eq:equiv-ver-b}
h\|\nuh \cdot \nablas v^l\|^2_{\partial \Gammahl}
\lesssim 
\tn v \tn^2_{\partial \Gammah}
\end{equation}
The converse estimate follows by instead starting from the 
identity 
\begin{equation}
\nuhh \cdot \nablash v
=
\nuhh \cdot B^{T} B^{-T} \nablash v 
= 
 B \nuhh \cdot \nablas v^l
\end{equation}
and then using similar estimates give
\begin{equation}\label{eq:equiv-ver-c}
h \|\nuhh \cdot \nablash v \|_{\partial \Gammah}^2
\lesssim \tn v^l \tn_{\partial \Gammahl}^2
\end{equation}
Together (\ref{eq:equiv-ver-a}), (\ref{eq:equiv-ver-b}), 
and (\ref{eq:equiv-ver-c}) prove the equivalence 
$\tn v^l \tn_{\partial\Gammahl} 
\sim \tn v \tn_{\partial\Gammah}$.

\subsection{Coercivity and Continuity}

Using standard techniques, see \cite{Ni72} or  
Chapter 14.2 in \cite{LaBe13}, we find that 
$a_{\Gammah}$ is coercive
\begin{equation}\label{eq:coercivity}
\tn v \tn_{\Gammah}^2 \lesssim a_{\Gammah}(v,v)\qquad \forall 
v \in V_h
\end{equation}
provided $\beta>0$ is large enough. Furthermore, it follows directly 
from the Cauchy-Schwarz inequality that $a_\Gammah$ is continuous
\begin{equation}
a_\Gammah(v,w) \lesssim  \tn v \tn_\Gammah \tn w \tn_\Gammah 
\qquad 
\forall v,w \in V(\Gammah)
\end{equation}
Existence and uniqueness of the solution $u_h$ to the finite 
element problem (\ref{eq:fem}) follows directly from the 
Lax-Milgram lemma.

\subsection{Extension and Interpolation}
\label{sec:extension}
Next, we briefly review the fundamental interpolation estimates which will be used
throughout the remaining work. 

\paragraph{Extension.}
We note that there is an extension operator 
$E:H^s(\Gamma) \rightarrow H^s(U_{\delta_0}(\Gamma)\cap\Gamma_0)$ 
such that 
\begin{equation}\label{eq:extension-stability}
\| Ev \|_{H^s(U_{\delta_0}(\Gamma)\cap \Gamma_0)} \lesssim \| v \|_{H^s(\Gamma)}, 
\qquad s\geq 0 
\end{equation} 
This result follows by mapping to a reference neighborhood in $\IR^2$ 
using a smooth local chart and then applying the extension theorem, see \cite{Fo95}, 
and finally mapping back to the surface. For brevity 
we shall use the notation $v$ for the extended function as well, 
i.e., $v=Ev$ on $U_{\delta_0}(\Gamma)\cap \Gamma_0$. We can then 
extend $v$ to $U_{\delta_0}(\Gamma)$ by using the closest point 
extension, we denote this function by $v^e$.

\paragraph{Interpolation.}
We may now define an interpolation operator 
$\pi_h: C(U_{\delta_0}(\Gamma))\ni v \mapsto \pi_{h,L} v \in  V_h$, 
where $\pi_{h,L}$ is the nodal Lagrange interpolation operator. Consequently, 
the following interpolation error estimate holds
\begin{equation}\label{eq:interpol-basic}
\| v^e - \pi_h v^e \|_{H^m(K)} \lesssim h^{s-m}\|v \|_{H^s(K^l)}, 
\qquad 0\leq m \leq s \leq k+1
\end{equation}
Using the trace inequality to estimate 
the boundary contribution in $\tn \cdot \tn_{\Gammah}$,
\begin{equation}\label{eq:traceelement}
\| w \|^2_{\partial K} \lesssim h_K^{-1} \| w \|_K^2 
+ h_K \| \nablash w \|_K^2,\qquad v \in H^1(K), K\in \mcK_h
\end{equation}
where $h_K \sim h$ is the diameter of element $K$, 
we 
obtain
\begin{equation}\label{eq:interpolation-energy}
\tn v^e - \pi_h v^e \tn_\Gammah \lesssim h^k \| v \|_{H^{k+1}(\Gamma)}
\end{equation}
Note also that since we are concerned with smooth problems where the
solution at least resides in $H^2(\Gamma)$ and the surface is two dimensional it follows that the solution is indeed continuous from 
the Sobolev embedding theorem and therefore using the Lagrange 
interpolant is justified. We will use the
short hand notation $\pi_h^l v = (\pi_h v^e)^l$ for the lift of the
interpolant and we note that we obtain corresponding interpolation
error estimates on $\Gammah$ using equivalence of norms. We refer to
\cite{De09} and \cite{Ne76} for further details on interpolation on triangulated surfaces and \cite{Ci78} for
interpolation error estimates for the standard Lagrange 
interpolation operator.

\subsection{Strang Lemma} 

In order to formulate a Strang Lemma we first define auxiliary forms 
on $\Gammahl$ corresponding to the discrete form on $\Gammah$ as 
follows
\begin{align}\label{eq:ah}
a_{\Gammahl}(v,w) &= (\nablas v,\nablas w)_{\Gamma_h^l} 
\\ \nonumber
&\qquad - (\nuh \cdot \nablas v,w)_{\partial \Gamma_h^l}
-  (v, \nuh \cdot \nablas w)_{\partial \Gamma_h^l}
\\ \nonumber
&\qquad + \beta h^{-1}(v,w)_{\partial \Gamma_h^l} 
\\ \label{eq:lh}
l_{\Gammahl}(w) &= (f,w)_{\Gammahl} 
- (g \circ \tp ,\nuh \cdot \nablash w)_{\partial \Gammahl} 
+ \beta h^{-1}(g\circ \tp, w)_{\partial \Gammahl}           
\end{align}
Here the mapping $\tp:\partial\Gammahl \rightarrow \partial \Gamma$  
is defined by the identity 
\begin{equation}\label{def:ptilde}
\tp \circ p(x) = p_{\partial \Gamma}(x), \qquad x \in \partial \Gammah
\end{equation}
Then we find that $\tp$ is a bijection since 
$p:\partial \Gammah \rightarrow \partial \Gammahl$ and 
$p_{\partial \Gamma}: \partial \Gammah \rightarrow \partial \Gamma$ 
are bijections. Note that $a_\Gammahl$, $l_\Gammahl$, and $\tp$ are 
only used in the analysis and do not have to be implemented.

\begin{lem}\label{lem:strang} With $u$ the solution of (\ref{eq:poissoninterior}-\ref{eq:poissonbc}) and $u_h$ the solution of (\ref{eq:fem}) the following estimate holds 
\begin{align}\label{eq:strang}
\tn u - u_h^l \tn_{\Gamma_h^l} 
&\lesssim 
\tn u - (\pi_h u)^l  \tn_{\Gamma_h^l} 
\\ \nonumber 
&\qquad
+ 
\sup_{v \in V_h \setminus \{0\}} 
\frac{a_{\Gammah}( \pi_h u,v) - a_{\Gammahl}((\pi_h u)^l ,v^l)}{\tn v \tn_{\Gammah}}  
\\ \nonumber 
&\qquad
+ 
\sup_{v \in V_h \setminus \{0\}} 
\frac{l_{\Gammahl}(v^l) - l_{\Gammah}(v)}{\tn v \tn_{\Gammah}}
\\ \nonumber
&\qquad 
+ \sup_{v \in V_h \setminus \{0\}}
\frac{a_{\Gammahl}(u,v^l) - l_{\Gammahl}(v^l)}{\tn v \tn_{\Gammah}}
\end{align}
\end{lem}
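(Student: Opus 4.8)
The plan is to run a standard variational-crime (Strang) argument: reduce the lifted error to the discrete error by a triangle inequality, convert the discrete error into a Galerkin residual on $\Gammah$ via coercivity, and then split that residual into the three listed consistency contributions by inserting the auxiliary lifted forms. First I would write
$u - u_h^l = \bigl(u - (\pi_h u)^l\bigr) + (\pi_h u - u_h)^l$
and apply the triangle inequality in $\tn\cdot\tn_{\Gammahl}$. The first summand is exactly the interpolation term on the right-hand side of (\ref{eq:strang}), so it remains to control $\tn(\pi_h u - u_h)^l\tn_{\Gammahl}$. Setting $e_h=\pi_h u - u_h\in V_h\subset V(\Gammah)$ and invoking the energy-norm equivalence (\ref{eq:equivalence-energy-lift}) gives $\tn e_h^l\tn_{\Gammahl}\sim\tn e_h\tn_{\Gammah}$; coercivity (\ref{eq:coercivity}) then yields $\tn e_h\tn_{\Gammah}^2\lesssim a_{\Gammah}(e_h,e_h)$, hence $\tn e_h\tn_{\Gammah}\lesssim \sup_{v\in V_h\setminus\{0\}} a_{\Gammah}(e_h,v)/\tn v\tn_{\Gammah}$.

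Next I would expand the numerator. Since $u_h$ solves (\ref{eq:fem}), we have $a_{\Gammah}(e_h,v)=a_{\Gammah}(\pi_h u,v)-l_{\Gammah}(v)$ for every $v\in V_h$. Inserting the auxiliary lifted quantities by adding and subtracting $a_{\Gammahl}((\pi_h u)^l,v^l)$, $l_{\Gammahl}(v^l)$, and $a_{\Gammahl}(u,v^l)$ rearranges this into
\begin{align*}
a_{\Gammah}(e_h,v)
&= \bigl[a_{\Gammah}(\pi_h u,v) - a_{\Gammahl}((\pi_h u)^l,v^l)\bigr]
+ \bigl[l_{\Gammahl}(v^l) - l_{\Gammah}(v)\bigr] \\
&\quad + \bigl[a_{\Gammahl}(u,v^l) - l_{\Gammahl}(v^l)\bigr]
+ a_{\Gammahl}\bigl((\pi_h u)^l - u,\, v^l\bigr).
\end{align*}
After dividing by $\tn v\tn_{\Gammah}$ and taking the supremum, the first three bracketed pieces reproduce precisely the second, third, and fourth terms on the right-hand side of (\ref{eq:strang}).

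It then remains to dispose of the leftover term $a_{\Gammahl}((\pi_h u)^l - u,\,v^l)$. Here I would use continuity of the lifted form together with (\ref{eq:equivalence-energy-lift}) to get $a_{\Gammahl}((\pi_h u)^l - u,\,v^l)\lesssim \tn (\pi_h u)^l - u\tn_{\Gammahl}\,\tn v^l\tn_{\Gammahl}\sim \tn u-(\pi_h u)^l\tn_{\Gammahl}\,\tn v\tn_{\Gammah}$; dividing by $\tn v\tn_{\Gammah}$ yields a second copy of the interpolation term, which is absorbed into the first summand. Collecting all the estimates gives (\ref{eq:strang}).

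The step demanding the most care is the continuity of the auxiliary form $a_{\Gammahl}$ in the lifted energy norm, uniformly in $h$: unlike $a_{\Gammah}$, whose continuity is already recorded above, $a_{\Gammahl}$ carries the Nitsche flux and the $h^{-1}$ penalty on $\partial\Gammahl$, so I must verify that Cauchy--Schwarz against the boundary part $\tn\cdot\tn_{\partial\Gammahl}$ of the norm (\ref{eq:energynormhl}) closes, and that the exact solution $u$ is an admissible argument, i.e.\ that its closest point extension to $\Gammahl\subset\Gamma_0$ lies in $V(\Gammahl)$ (which holds since $u\in H^{k+1}(\Gamma)$ for $k\geq 1$). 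Both are routine given the norm definitions and the established equivalences, but they are exactly the places where the boundary terms could otherwise fail to be controlled. Note that the actual sizes of the three consistency suprema -- the geometric and boundary-data errors -- are not addressed here; the Strang lemma only organizes them, and their quantitative bounds are established subsequently.
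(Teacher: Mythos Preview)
Your proof is correct and follows essentially the same route as the paper's: triangle inequality, norm equivalence plus coercivity to pass to a discrete residual, then the same add-and-subtract decomposition into the four pieces $I$--$IV$. In fact you are more explicit than the paper about the leftover term $a_{\Gammahl}\bigl((\pi_h u)^l - u,\,v^l\bigr)$, which the paper simply lists as $III$ and then says the Strang estimate follows ``directly''; your use of continuity of $a_{\Gammahl}$ (established exactly as for $a_{\Gammah}$ via Cauchy--Schwarz against $\tn\cdot\tn_{\Gammahl}$) to absorb it into the interpolation term is the intended step.
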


\begin{rem} In (\ref{eq:strang}) the first term on the right hand side
is an interpolation error, the second and third accounts for the
approximation of the surface $\Gamma$ by $\Gammah$ and can be considered as
quadrature errors, finally the fourth term is a consistency error term
which accounts for the approximation of the boundary of the surface.
\end{rem}

\begin{proof}
We have 
\begin{align}
\tn u - u_h^l \tn_{\Gammahl} 
&\lesssim 
\tn u - (\pi_h u^e)^l \tn_{\Gammahl} + \tn (\pi_h u^e)^l- u_h^l \tn_{\Gammahl}
\end{align}
Using equivalence of norms (\ref{eq:equivalence-energy-lift}) 
and  coercivity of the bilinear form $a_h$ we have 
\begin{align}\label{eq:infsup}
\tn (\pi_h u^e)^l- u_h^l \tn_{\Gammahl}
\sim
\tn \pi_h u^e - u_h \tn_\Gammah 
&\lesssim \sup_{v \in V_h\setminus\{0\}} 
\frac{a_\Gammah( \pi_h u^e - u_h,v)}{\tn v \tn_{\Gammah}}
\end{align}
Next we have the identity 
\begin{align}\label{eq:strang-a}
a_{\Gammah}( \pi_h u^e - u_h,v) &= a_{\Gammah}( \pi_h u^e,v) - l_\Gammah(v)
\\ \label{eq:strang-b}
&= a_{\Gammah}( \pi_h u^e,v) 
- a_{\Gammahl}(u,v^l) + l_{\Gammahl}(v^l) 
- l_\Gammah(v) 
\\ \nonumber
&\qquad + a_{\Gammahl}(u,v^l) - l_{\Gammahl}(v^l)
\\ \label{eq:strang-c}
&= \underbrace{a_\Gammah( \pi_h u^e,v) 
- a_\Gammahl((\pi_h u^e)^l ,v^l)}_{I} 
+ \underbrace{l_{\Gammahl}(v^l) - l_{\Gammah}(v)}_{II}
\\ \nonumber 
&\qquad + \underbrace{a_\Gammahl((\pi_h u^e)^l - u,v)}_{III} 
+  \underbrace{a_{\Gammahl}(u,v^l) - l_{\Gammahl}(v^l)}_{IV}
\end{align}
where in (\ref{eq:strang-a}) we used the equation (\ref{eq:fem}) to eliminate $u_h$, in (\ref{eq:strang-b}) we added and subtracted $a_{\Gammahl}(u,v^l)$ and $l_{\Gammahl}(v^l)$, in 
(\ref{eq:strang-c}) we added and subtracted 
$a_\Gammahl((\pi_h u^e)^l,v)$, and rearranged the terms. Combining 
(\ref{eq:infsup}) and (\ref{eq:strang-c}) directly yields the 
Strang estimate (\ref{eq:strang}).
\end{proof}

\subsection{Estimate of the Consistency Error}
In this section we derive an estimate for the consistency error, i.e.,
the fourth term on the right hand side in the Strang Lemma
\ref{lem:strang}.
First we derive an identity for the consistency 
error in Lemma \ref{lem:identityconsistency} and then we prove 
two technical results in Lemma \ref{lem:boundaryerror} and 
Lemma \ref{lem:poincare}, and finally we give a bound of the 
consistency error in Lemma \ref{lem:consistency}. 
In order to keep track 
of the error emanating from the boundary approximation we introduce 
the notation 
\begin{equation}\label{eq:deltah}
\delta_h = \| \widetilde{\rho}_{\partial \Gamma} \|_{L^\infty(\partial \Gammahl)}\lesssim h^{k+1}
\end{equation}
where 
\begin{equation}\label{eq:tilderho}
\widetilde{\rho}_{\partial \Gamma}(x) 
= |\tp(x) - x|_{\IR^3},\qquad x \in \Gammahl
\end{equation}
The estimate in (\ref{eq:deltah}) follows from the triangle inequality 
and the geometry approximation properties (\ref{eq:geomassum-c}) and (\ref{eq:geomassum-e}). 

\begin{lem}\label{lem:identityconsistency} Let $u$ be the solution to 
(\ref{eq:poissoninterior}-\ref{eq:poissonbc}), then the 
following identity holds
\begin{align}\label{eq:consistencyidentity}
a_\Gammahl(u,v^l) - l_\Gammahl(v^l) 
&= 
-(f+\Delta_\Gamma u ,v^l)_{\Gamma_h^l \setminus \Gamma} 
\\ \nonumber
&\qquad + (u\circ \textcolor{black}{\tp} - u, \nuh \cdot \nabla_\Gamma v^l)_{\partial \Gammahl}
- \beta h^{-1} (u\circ \textcolor{black}{\tp} - u, v^l)_{\partial \Gammahl}
\end{align}
for all $v \in V_h$. 
\end{lem}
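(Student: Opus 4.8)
The plan is to establish the identity \eqref{eq:consistencyidentity} by direct computation, starting from the definitions of $a_\Gammahl$ and $l_\Gammahl$ in \eqref{eq:ah}--\eqref{eq:lh} and applying Green's formula \eqref{eq:greens} on the lifted surface $\Gammahl$. First I would write out
\begin{equation*}
a_\Gammahl(u,v^l) = (\nablas u, \nablas v^l)_{\Gammahl}
- (\nuh\cdot\nablas u, v^l)_{\partial\Gammahl}
- (u, \nuh\cdot\nablas v^l)_{\partial\Gammahl}
+ \beta h^{-1}(u,v^l)_{\partial\Gammahl},
\end{equation*}
and then apply Green's formula \eqref{eq:greens} to the volume term, treating $\Gammahl$ as the domain of integration. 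This converts $(\nablas u,\nablas v^l)_{\Gammahl}$ into $(-\Delta_\Gamma u, v^l)_{\Gammahl} + (\nuh\cdot\nablas u, v^l)_{\partial\Gammahl}$. The boundary term produced here cancels exactly against the $-(\nuh\cdot\nablas u, v^l)_{\partial\Gammahl}$ term already present in $a_\Gammahl$, which is the key algebraic simplification that makes the $\nuh\cdot\nablas u$ contributions disappear from the final expression.

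Next I would subtract $l_\Gammahl(v^l)$ as defined in \eqref{eq:lh}. The volume contribution becomes $(-\Delta_\Gamma u - f, v^l)_{\Gammahl}$, and here I would use that $u$ solves \eqref{eq:poissoninterior}, so $-\Delta_\Gamma u = f$ holds \emph{on $\Gamma$} but not necessarily on all of $\Gammahl$. Since $\Gammahl \supset \Gamma$ in general (the lifted discrete surface overshoots the exact surface near the boundary), the integrand $-\Delta_\Gamma u - f$ vanishes on $\Gamma$ and survives only on $\Gammahl\setminus\Gamma$; this yields the first term $-(f+\Delta_\Gamma u, v^l)_{\Gammahl\setminus\Gamma}$. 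This step requires that $f$ and $\Delta_\Gamma u$ are meaningfully defined on the sliver $\Gammahl\setminus\Gamma$, which is exactly what the regularity-preserving extension \eqref{eq:extension-f} and the closest-point extension of $u$ provide.

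For the remaining boundary terms, I would collect the symmetric-penalty and Nitsche flux contributions. The term $-(u, \nuh\cdot\nablas v^l)_{\partial\Gammahl} + \beta h^{-1}(u, v^l)_{\partial\Gammahl}$ from $a_\Gammahl$ must be combined with the data terms $+(g\circ\tp, \nuh\cdot\nablash v^l)_{\partial\Gammahl} - \beta h^{-1}(g\circ\tp, v^l)_{\partial\Gammahl}$ coming from $-l_\Gammahl(v^l)$. The crucial observation is that on $\partial\Gamma$ the exact solution satisfies $u = g$ by \eqref{eq:poissonbc}, and since $\tp:\partial\Gammahl\to\partial\Gamma$ is the bijection defined in \eqref{def:ptilde}, we have $g\circ\tp = u\circ\tp$ on $\partial\Gammahl$. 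Substituting this turns the data terms into $(u\circ\tp, \nuh\cdot\nablas v^l)_{\partial\Gammahl} - \beta h^{-1}(u\circ\tp, v^l)_{\partial\Gammahl}$, which combine with the $u$-terms to give precisely $(u\circ\tp - u, \nuh\cdot\nablas v^l)_{\partial\Gammahl} - \beta h^{-1}(u\circ\tp - u, v^l)_{\partial\Gammahl}$, matching the second and third terms in \eqref{eq:consistencyidentity}.

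The main obstacle I anticipate is bookkeeping rather than conceptual difficulty: one must be careful that Green's formula is applied on the correct domain $\Gammahl$ with the correct conormal $\nuh$ (the exterior unit conormal to $\partial\Gammahl$, not to $\partial\Gamma$), and that the identification $g\circ\tp = u\circ\tp$ is used consistently on the boundary. I would also need to verify that the $\nablash$ appearing in the definition of $l_\Gammahl$ in \eqref{eq:lh} is compatible with (or should be read as) the surface gradient $\nablas$ on $\Gammahl$ used throughout the identity; assuming the definitions align on $\partial\Gammahl$, no further work is required, and the identity follows purely by assembling the cancellations described above.
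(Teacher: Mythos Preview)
Your proposal is correct and follows essentially the same route as the paper: the paper starts from $(f,v^l)_{\Gammahl}$, integrates by parts to recognize $a_\Gammahl$, and then inserts $u\circ\tp - g\circ\tp = 0$, while you start from $a_\Gammahl$, integrate by parts to produce $(-\Delta_\Gamma u, v^l)_{\Gammahl}$, and then substitute $g\circ\tp = u\circ\tp$; the cancellations and the use of the boundary condition via $\tp$ are identical. One small correction: $\Gammahl$ need not contain $\Gamma$ in general --- the reduction of the volume term to $\Gammahl\setminus\Gamma$ holds simply because $f+\Delta_\Gamma u=0$ on $\Gammahl\cap\Gamma\subset\Gamma$, which is all your argument actually uses.
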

\begin{proof}
For $v \in V_h$ we have using Green's formula
\begin{align}
(f,v^l)_{\Gammahl} 
&= 
(f + \Delta_\Gamma u,v^l)_{\Gammahl} 
- (\Delta_\Gamma u,v^l)_{\Gammahl} 
\\
&=(f + \Delta_\Gamma u ,v^l)_{\Gamma_h^l \setminus \Gamma} 
+ (\nabla_\Gamma u,\nabla_\Gamma v^l)_{\Gammahl} 
- (\nuh \cdot \nabla_\Gamma u,v^l)_{\partial \Gammahl}
\\
&=(f+\Delta_\Gamma u ,v^l)_{\Gamma_h^l \setminus \Gamma} 
+ a_{\Gammahl}(u,v^l) 
+ (u,\nuh \cdot \nabla_\Gamma v^l)_{\partial \Gammahl}
- \beta h^{-1} (u,v^l)_{\partial \Gammahl}
\end{align}
where we used the fact that $f+\Delta_\Gamma u = 0$ on $\Gamma$ 
and the definition (\ref{eq:ah}) of $a_\Gammahl$. Next using the 
boundary condition $u=g$ on $\partial \Gamma$ we conclude that 
\begin{align}
(f,v^l)_{\Gammahl} 
&=(f+\Delta_\Gamma u ,v^l)_{\Gamma_h^l \setminus \Gamma} 
+ a_{\Gammahl}(u,v^l) 
+ (u,\nuh \cdot \nabla_\Gamma v^l)_{\partial \Gammahl}
- \beta h^{-1} (u,v^l)_{\partial \Gammahl}
\\ \nonumber
&\qquad - (u\circ \tp - g \circ \tp, \nuh \cdot \nabla_\Gamma v^l)_{\partial \Gammahl}
+ \beta h^{-1} (u\circ \tp - g \circ \tp, v^l)_{\partial \Gammahl}  
\end{align}
Rearranging the terms we obtain
\begin{align}\nonumber
&(f,v^l)_{\Gamma_h^l} 
- (g \circ \tp, \nuh \cdot \nabla_\Gamma v^l)_{\partial \Gammahl} 
+ \beta h^{-1} (g \circ \tp,v^l)_{\partial \Gammahl} 
\\
&\qquad=(f+\Delta_\Gamma u ,v^l)_{\Gamma_h^l \setminus \Gamma} 
+ a_{\Gammahl}(u,v^l) 
\\ \nonumber
&\qquad \qquad - (u\circ \tp - u , \nuh \cdot \nabla_\Gamma v^l)_{\partial \Gammahl}
+ \beta h^{-1} (u\circ \tp - u,v^l )_{\partial \Gammahl}  
\end{align}
where the term on the left hand side is $l_{\Gammahl}$ and the proof is 
complete.
\end{proof}
\begin{lem}\label{lem:boundaryerror} The following estimate holds
\begin{equation}\label{eq:boundaryerror}
\| v \circ \tp - v \|_{\partial \Gamma_h^l}
\lesssim 
\delta_h \| v \|_{H^2(\Gamma)},\qquad 
v \in H^2(\Gamma)
\end{equation}
where $v|_{\partial \Gammahl} = (E v)_{\partial \Gammahl}$. 
\end{lem}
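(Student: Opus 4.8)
The plan is to reduce the estimate to the fundamental theorem of calculus along short paths joining $x$ to $\tp(x)$, and then to recover the two powers of $\delta_h^{1/2}$ separately: one from a Cauchy--Schwarz inequality applied to the path integral, and one from a tubular-neighbourhood trace estimate that exploits the extra $H^2$-regularity of $v$.

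First I would fix $v\in H^2(\Gamma)$, pass to its extension $Ev$ (still denoted $v$) on $U_{\delta_0}(\Gamma)\cap\Gamma_0$ using the stability bound \eqref{eq:extension-stability}, and recall that by the two-dimensional Sobolev embedding $v$ is continuous, so pointwise evaluation is meaningful. For each $x\in\partial \Gammahl$ the point $\tp(x)\in\partial\Gamma$ satisfies $|\tp(x)-x|_{\IR^3}=\widetilde{\rho}_{\partial \Gamma}(x)\le\delta_h$, so for $\delta_h$ small enough $x$ and $\tp(x)$ can be joined by a path $\sigma_x$ lying on $\Gamma_0$ (inside the neighbourhood where $v$ is defined) of arc length $\lesssim\delta_h$. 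Writing the difference as a path integral of the tangential gradient and applying Cauchy--Schwarz on the path gives the pointwise bound
\[
|v(\tp(x))-v(x)|^2 \lesssim \delta_h \int_{\sigma_x}|\nablas v|^2\,d\ell .
\]

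Next I would integrate over $x\in\partial \Gammahl$. The union $\{\sigma_x : x\in\partial \Gammahl\}$ sweeps out a ribbon $S_{\delta_h}\subset\Gamma_0$ of width $\lesssim\delta_h$ around $\partial\Gamma$, and the map $(x,\ell)\mapsto\sigma_x(\ell)$ is a bi-Lipschitz parametrisation whose Jacobian is bounded above and below; hence the product measure $d\ell\,ds(x)$ is comparable to surface measure on $S_{\delta_h}$, so that
\[
\| v\circ\tp - v\|^2_{\partial \Gammahl} \lesssim \delta_h \int_{\partial \Gammahl}\!\int_{\sigma_x}|\nablas v|^2\,d\ell\,ds(x) \lesssim \delta_h\,\|\nablas v\|^2_{S_{\delta_h}} .
\]
This alone only yields a factor $\delta_h^{1/2}$; the remaining half power comes from a tubular trace estimate. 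Introducing Fermi coordinates $(s,r)$ around $\partial\Gamma$, with $r$ the signed distance, the identity $\nablas v(s,r)=\nablas v(s,0)+\int_0^r\partial_r\nablas v\,dr'$ combined with Cauchy--Schwarz and the bounded Jacobian of the coordinates gives
\[
\|\nablas v\|^2_{S_{\delta_h}} \lesssim \delta_h\,\|\nablas v\|^2_{\partial\Gamma} + \delta_h^2\,\|v\|^2_{H^2(\Gamma)} \lesssim \delta_h\,\|v\|^2_{H^2(\Gamma)} ,
\]
where the trace inequality $\|\nablas v\|_{\partial\Gamma}\lesssim\|v\|_{H^2(\Gamma)}$ is exactly what makes the $H^2$-norm appear. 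Combining the last two displays and taking square roots yields the claimed bound.

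The hard part is not any individual inequality but organising the two gains of $\delta_h^{1/2}$ correctly: the naive path-integral argument is off by a factor $\delta_h^{1/2}$, and recovering the optimal power is precisely where the $H^2$-regularity of $v$ (through the trace of $\nablas v$ on $\partial\Gamma$) must be used. Care is also required to justify that the ribbon parametrisation has bounded Jacobian — equivalently, that the paths $\sigma_x$ have bounded overlap — and that every path remains inside the tubular neighbourhood on which $Ev$ is defined; both follow from $\delta_h\lesssim h^{k+1}$ being small together with the smoothness of $\Gamma$ and $\partial\Gamma$.
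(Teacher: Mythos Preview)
Your proof is correct and follows essentially the same strategy as the paper's: one factor of $\delta_h^{1/2}$ from Cauchy--Schwarz along short paths joining $x$ to $\tp(x)$, and a second from the thinness of the swept-out ribbon together with the trace of $\nablas v$ on $\partial\Gamma$. The only differences are cosmetic --- the paper uses straight line segments in $\IR^3$ lifted to $\Gamma_0$ via the closest-point map rather than intrinsic surface paths, and extracts the second $\delta_h^{1/2}$ via an $L^\infty$ H\"older estimate in the normal direction plus a uniform trace inequality on the level curves $\partial\Gamma_t$, in place of your Fermi-coordinate Poincar\'e argument.
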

\begin{proof}
For each $x \in \Gammahl$ let $I_x$ be the line segment between 
$x$ and $\tp(x)\in \partial \Gamma$, $t_x$ the 
unit tangent vector to $I_x$, and let 
$x(s) = (1 - s/{\rho_{\partial \Gamma}(x)})x + (s/\rho_{\partial \Gamma}(x)) \tp(x)$, $s\in [0,\rho_{\partial \Gamma}]$, 
be a parametrization of $I_x$. Then we have the following estimate 
\begin{align}\label{eq:consistency-c}
|v \circ \tp(x) - v(x)| 
&\lesssim \left|\int_0^{\rho_{\partial \Gamma}(x)}  \nabla v^e(x(s))\cdot t_x ds\right|
\\ \label{eq:consistency-d}
&\lesssim \|\nabla v^e \cdot t_x \|_{I_x} |\rho_{\partial \Gamma}(x)|^{1/2} 
\\ \label{eq:consistency-e}
&\lesssim \|(\nablas v)\circ p \|_{I_x} |\rho_{\partial \Gamma}(x)|^{1/2} 
\\ \label{eq:consistency-ee}
&\lesssim \|\nablas v \|_{I_x^l} |\rho_{\partial \Gamma}(x)|^{1/2} 
\end{align}
where we used the following estimates: 
(\ref{eq:consistency-d}) the Cauchy-Schwarz inequality, (\ref{eq:consistency-e}) the chain rule to conclude that 
$\nabla v^e \cdot t_x = \nabla (v\circ p) \cdot t_x 
= ((\nablas v)\circ p) \cdot dp \cdot t_x$, 
and thus we have the estimate 
\begin{equation}
\|\nabla v^e \cdot t_x \|_{I_x}\lesssim \| (\nablas v) \circ p \|_{I_x}
\end{equation}
since $dp$ is uniformly bounded in $U_{\delta_0}(\Gamma_0)$, 
(\ref{eq:consistency-ee}) changed the domain of integration 
from $I_x$ to $I_x^l = p(I_x) \subset \Gamma_0$. 
Integrating over $\partial \Gammahl$ gives
\begin{align}
\| v \circ \pb - v\|_{\partial \Gammahl}^2
&\lesssim 
\int_{\partial \Gamma_h^l} 
\| \nablas v \|^2_{I_x^l} |\rho_{\partial \Gamma}(x)| dx  
\\  \label{eq:consistency-fa}
&\lesssim 
\|\rho_{\partial \Gamma}\|_{L^\infty(\Gammahl)}
\int_{\partial \Gamma_h^l} 
\|\nablas v \|^2_{I_x^l}  dx
\\  \label{eq:consistency-f}
&\lesssim 
\delta_h
\int_{\partial \Gamma} 
\|\nablas v \|^2_{I_{y}^l} d y  
\\  \label{eq:consistency-faa}
&\lesssim 
\delta_h
\| \nablas v \|^2_{U_{\delta_h}(\partial \Gamma) \cap \Gamma_0} 
\end{align}
where we used the following estimates: 
(\ref{eq:consistency-fa}) we used H\"older's inequality, 
(\ref{eq:consistency-f}) we used the fact that 
$\|\rho_{\partial \Gamma}\|_{L^\infty(\Gammahl)} \lesssim \delta_h$ 
and changed domain of integration from $\partial \Gammahl$ to $\partial \Gamma$, 
and (\ref{eq:consistency-faa}) we integrated over a larger 
tubular neighborhood $U_{\delta_h}(\partial \Gamma)\cap \Gamma_0=\{ x \in \Gamma_0\,:\, 
|\rho_{\partial \Gamma}(x)|\lesssim \delta_h\}$ 
of $\partial \Gamma$ of thickness $2\delta_h$. We thus conclude that we have the estimate 
\begin{equation}\label{eq:consistency-faaa}
\| v \circ \pb - v\|_{\partial \Gammahl}^2 
\lesssim \delta_h \| \nablas v \|^2_{U_{\delta_h}^l(\partial \Gamma) \cap \Gamma_0} 
\end{equation}

In order to proceed with the estimates we introduce, for each 
$t\in [-\delta,\delta]$, with $\delta>0$ small enough, the 
surface 
\begin{equation}
\Gamma_t = 
\begin{cases}
\Gamma \cup (U_t(\partial \Gamma)\cap \Gamma_0) & t\geq 0
\\
\Gamma \setminus (U_t(\partial \Gamma)\cap \Gamma_0) & t<0
\end{cases}
\end{equation}
and its boundary $\partial \Gamma_t$. Starting from 
(\ref{eq:consistency-faaa}) and using H\"older's inequality in the 
normal direction we obtain
\begin{align} \label{eq:consistency-g}
\| v \circ \pb - v \|_{\partial \Gamma_h^l}
&\lesssim 
\delta_h 
\underbrace{\sup_{t\in [-\delta,\delta]} \| \nablas v \|_{\textcolor{black}{\partial \Gamma_t}}}_{\bigstar}
\\ 
&\lesssim 
\delta_h \| v \|_{H^2(\Gamma)}
\end{align}
Here we estimated $\bigstar$ using a trace inequality 
\begin{align}
\sup_{t\in [-\delta,\delta]} C_t \| \nablas v \|_{\textcolor{black}{\partial \Gamma_t}}
&\leq
\sup_{t\in [-\delta,\delta]} \| \nablas v \|_{H^1(\Gamma_t)}
\\
&\leq
\underbrace{\left(\sup_{t\in[-\delta,\delta]} C_t\right)}_{\lesssim 1} 
\| v \|_{H^2(\Gamma_{\delta})}
\\
&\lesssim \| v \|_{H^2(\Gamma)}
\end{align}
where we used the stability (\ref{eq:extension-stability}) of the 
extension of $v$ from $\Gamma_0$ to $\Gamma_{\delta}$. To see that 
the constant $C_t$ is uniformly bounded for $t\in [-\delta,\delta]$,    
we may construct a diffeomorphism $F_t:\Gamma_0 \rightarrow \Gamma_t$ 
that also maps $\partial \Gamma_0$ onto $\Gamma_t$, which has uniformly 
bounded derivatives for $t\in [-\delta,\delta]$, see the construction in \cite{BuHaLaZa15}. For $v\in H^1(\Gamma_t)$ we then have
\begin{align}
\| w \|_{\partial \Gamma_t} 
&\lesssim 
\| w \circ F_t \|_{\partial \Gamma_0}
\lesssim 
\| w \circ F_t \|_{H^1(\Gamma_0)}
\lesssim 
\| w \|_{H^1(\Gamma_t)}
\end{align}
where we used the uniform boundedness of first order derivatives of 
$F_t$ in the first and third inequality and applied a standard trace inequality on the fixed domain $\Gamma_0=\Gamma$ in the second inequality.
\end{proof}
\begin{lem}\label{lem:poincare} The following estimates hold
\begin{align}\label{eq:poincarethinbgamma}
\| v \|^2_{\Gammahl\setminus \Gamma} 
&\lesssim 
\delta_h \| v \|^2_{\partial \Gamma}
+
\delta_h^2 \| \nablas v \|^2_{\Gammahl\setminus \Gamma}
\\ \label{eq:poincarethinbgammahl}
\| v \|^2_{\Gammahl\setminus \Gamma} 
&\lesssim 
\delta_h \| v \|^2_{\partial \Gammahl}
+
\delta_h^2 \| \nablas v \|^2_{\Gammahl \setminus \Gamma}
\end{align}
for $v \in H^1(U_{\delta_0}(\partial \Gamma) \cap \Gamma_0)$ 
and $\delta_h \in (0,\delta_0]$.
\end{lem}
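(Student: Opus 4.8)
The plan is to introduce a tubular coordinate system for the thin strip $\Gammahl \setminus \Gamma$ and to reduce both inequalities to one-dimensional Poincaré/trace estimates in the direction transverse to $\partial \Gamma$. Since $\partial \Gamma$ is a smooth curve on the smooth surface $\Gamma_0$, for $\delta_h \le \delta_0$ small enough I can parametrize a neighborhood of $\partial \Gamma$ in $\Gamma_0$ by a map $(s,t)\mapsto y(s,t)$, where $s$ is arc length along $\partial \Gamma$ and $t$ is the signed geodesic distance to $\partial \Gamma$ measured within $\Gamma_0$, with the convention that $t>0$ on the side of $\partial \Gamma$ exterior to $\Gamma$. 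This map is a diffeomorphism with Jacobian $J(s,t)$ uniformly bounded above and below by constants independent of $h$; moreover $\partial_t y$ is a unit tangent vector to $\Gamma_0$, so $\partial_t v = \nablas v \cdot \partial_t y$ and hence $|\partial_t v| \le |\nablas v|$. Because $\|\rho_{\partial \Gamma}\|_{L^\infty(\partial \Gammahl)} \lesssim \delta_h$ by the definition \eqref{eq:deltah}, the strip $\Gammahl \setminus \Gamma$ is contained in the one-sided region $\{0 \le t \le \phi(s)\}$ with $0 \le \phi(s) \lesssim \delta_h$, where $t=\phi(s)$ describes the relevant part of the outer boundary $\partial \Gammahl$.

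For \eqref{eq:poincarethinbgamma} I would start the transverse integration from $\partial \Gamma$. The fundamental theorem of calculus gives $v(s,t) = v(s,0) + \int_0^t \partial_\tau v(s,\tau)\,d\tau$, so Cauchy-Schwarz together with $|\partial_\tau v|\le|\nablas v|$ yields $|v(s,t)|^2 \lesssim |v(s,0)|^2 + t\int_0^{\phi(s)} |\nablas v|^2\,d\tau$. Integrating in $t$ over $[0,\phi(s)]$ and using $\phi(s)\lesssim\delta_h$ produces $\int_0^{\phi(s)} |v(s,t)|^2\,dt \lesssim \delta_h\,|v(s,0)|^2 + \delta_h^2 \int_0^{\phi(s)} |\nablas v|^2\,d\tau$. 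Integrating in $s$ and converting back to surface integrals via the uniform bounds on $J$, while recognizing $v(s,0)$ as the trace on $\partial \Gamma$, gives the first estimate. For \eqref{eq:poincarethinbgammahl} I repeat the argument but run the fundamental theorem of calculus from the outer boundary, i.e. $v(s,t) = v(s,\phi(s)) - \int_t^{\phi(s)} \partial_\tau v\,d\tau$; the identical manipulations produce $\delta_h\|v\|^2_{\partial \Gammahl} + \delta_h^2\|\nablas v\|^2_{\Gammahl\setminus\Gamma}$, where the trace is now collected along the graph $t=\phi(s)$, whose arc-length element is comparable to $ds$ by the bounded slope of $\partial \Gammahl$, and where extending the trace integral to all of $\partial \Gammahl$ only enlarges the right-hand side.

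The main obstacle is geometric bookkeeping rather than analysis: one must verify that the transverse coordinate is well defined and that the Jacobian $J$ and the boundary arc-length elements are uniformly equivalent to the flat ones for all $h$, so that the hidden constants are genuinely $h$-independent, and one must check that $\Gammahl\setminus\Gamma$ indeed lies in the one-sided strip $\{0\le t\le \phi(s)\}$ rather than straddling $\partial \Gamma$. These facts follow from the smoothness of $\Gamma_0$ and $\partial \Gamma$ together with the geometric approximation properties, in particular $\delta_h\lesssim h^{k+1}$, and a diffeomorphism construction of the kind already invoked in Lemma \ref{lem:boundaryerror} (see \cite{BuHaLaZa15}). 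Once the coordinate frame is fixed, everything collapses to the two one-dimensional estimates above.
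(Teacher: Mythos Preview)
Your proposal is correct and follows essentially the same strategy as the paper: reduce to a one-dimensional Poincar\'e/trace estimate along paths transverse to $\partial\Gamma$, then integrate in the tangential direction. The only cosmetic difference is that the paper runs the fundamental theorem of calculus along the straight-line segments $I_x\subset\IR^3$ joining $x\in\partial\Gammahl$ to $\tp(x)\in\partial\Gamma$ (and their lifts $I_x^l$ to $\Gamma_0$), reusing the setup from Lemma~\ref{lem:boundaryerror}, whereas you work intrinsically in geodesic normal coordinates on $\Gamma_0$; the resulting estimates and the handling of the Jacobian/arc-length equivalences are the same.
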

\begin{proof} 
Using the same notation as in Lemma \ref{lem:boundaryerror} 
and proceeding in the same way as in 
(\ref{eq:consistency-c}-\ref{eq:consistency-ee}) we  
obtain, for each $y \in I_x$, 
\begin{align}\label{eq:poincare-a}
|v(y)|
&\lesssim |v \circ \tp(x)| + \left|\int_0^{\rho_{\partial \Gamma}(y)}  \nabla v^e(x(s))\cdot t_x ds\right|
\\ \label{eq:poincare-b}
&\lesssim  |v \circ \tp(x)|  + \| \nablas v \|_{I_x^l} |\rho_{\partial \Gamma}(x)|^{1/2} 
\\ \label{eq:poincare-c}
&\lesssim  |v \circ \tp(x)|  + \delta_h^{1/2} \| \nablas v \|_{I_x^l}  
\end{align}
Integrating along $I_x$ we obtain
\begin{align} \label{eq:poincare-d}
\int_{I_x} v^2(y) dy 
& \lesssim 
\int_{I_x} 
(|v \circ \tp(x)|^2  + \delta_h \| \nablas v \|^2_{I_x^l}) dy
\\  \label{eq:poincare-e}
&\lesssim 
 \delta_h |v \circ \tp(x)|^2  + \delta_h^2 \| \nablas v \|^2_{I_x^l}
\end{align}
Finally, let $\partial \Gammahlout = \partial \Gammahl \setminus \Gamma$, 
be the part of $\partial \Gammahl$ that resides outside of $\Gamma$, 
then we have 
$\Gammahl \setminus \Gamma = \cup_{x\in \partial\Gammahlout} I_x^l$, 
and using the estimate (\ref{eq:poincare-e}) together with suitable 
changes of variables of integration we obtain
\begin{align} \label{eq:poincare-f}
\| v \|^2_{\Gammahl \setminus \Gamma} 
&\lesssim 
\int_{\partial \Gammahlout}  
\Big( \delta_h |v \circ \tp(x)|^2 
 + \delta_h^2 \| \nablas v \|^2_{I_x^l} \Big) dx
\\   \label{eq:poincare-g}
&\lesssim 
\delta_h \int_{\partial \Gammahlout} |v \circ \tp(x)|^2 dx
 + \delta_h^2 \int_{\partial \Gammahlout} \| \nablas v \|^2_{I_x^l} dx
\\  \label{eq:poincare-h}
&\lesssim 
\delta_h \|v \|_{\partial \Gamma}^2 
 + \delta_h^2 \| \nablas v \|^2_{\Gammahl \setminus \Gamma}
\end{align}
Thus the first estimate follows. The second is proved using the 
same technique.
\end{proof}
\begin{lem}\label{lem:consistency}  Let $u$ be the solution to 
(\ref{eq:poissoninterior}-\ref{eq:poissonbc}), then the 
following estimates hold
\begin{align}\label{eq:consistencyestimate-l2}
\Big|a_{\Gammahl}(u,v^l) - l_{\Gammahl}(v^l)\Big| 
&\lesssim 
\delta_h \|u\|_{H^{k+1}(\Gamma)} 
\Big( \| \nablas v^l \|_\Gammahl + h^{-1/2} \tn v \tn_{\partial \Gammahl} \Big) 
\\
&\lesssim \label{eq:consistencyestimate-energy}
h^{-1/2} \delta_h \| u \|_{H^{k+1}(\Gamma)} \tn v \tn_{\Gammah} \qquad \forall v \in V_h
\end{align}
\end{lem}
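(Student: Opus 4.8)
The plan is to start from the consistency identity \eqref{eq:consistencyidentity} of Lemma \ref{lem:identityconsistency}, which writes $a_\Gammahl(u,v^l)-l_\Gammahl(v^l)$ as the sum of a bulk residual term $-(f+\Delta_\Gamma u,v^l)_{\Gammahl\setminus\Gamma}$ and two boundary terms $(u\circ\tp-u,\nuh\cdot\nablas v^l)_{\partial\Gammahl}$ and $-\beta h^{-1}(u\circ\tp-u,v^l)_{\partial\Gammahl}$. I would bound each of the three terms by $\delta_h\|u\|_{H^{k+1}(\Gamma)}\bigl(\|\nablas v^l\|_\Gammahl + h^{-1/2}\tn v\tn_{\partial\Gammahl}\bigr)$, which is \eqref{eq:consistencyestimate-l2}. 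The second bound \eqref{eq:consistencyestimate-energy} then follows at once: estimate $\|\nablas v^l\|_\Gammahl\le\tn v^l\tn_\Gammahl$, apply the norm equivalence \eqref{eq:equivalence-energy-lift} to replace $\tn v^l\tn_\Gammahl$ by $\tn v\tn_\Gammah$ and $\tn v\tn_{\partial\Gammahl}$ by $\tn v\tn_{\partial\Gammah}\le\tn v\tn_\Gammah$, and use $1\le h^{-1/2}$ to subsume the gradient term into $h^{-1/2}\delta_h\|u\|_{H^{k+1}(\Gamma)}\tn v\tn_\Gammah$.

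The two boundary terms are straightforward. I would apply Cauchy--Schwarz on $\partial\Gammahl$ and invoke Lemma \ref{lem:boundaryerror}, giving $\|u\circ\tp-u\|_{\partial\Gammahl}\lesssim\delta_h\|u\|_{H^2(\Gamma)}\le\delta_h\|u\|_{H^{k+1}(\Gamma)}$. The remaining dual factors are precisely the two ingredients of the boundary norm \eqref{eq:energynormhl}: $\|\nuh\cdot\nablas v^l\|_{\partial\Gammahl}=h^{-1/2}(h\|\nuh\cdot\nablas v^l\|^2_{\partial\Gammahl})^{1/2}\le h^{-1/2}\tn v\tn_{\partial\Gammahl}$ and $h^{-1}\|v^l\|_{\partial\Gammahl}=h^{-1/2}(h^{-1}\|v^l\|^2_{\partial\Gammahl})^{1/2}\le h^{-1/2}\tn v\tn_{\partial\Gammahl}$, so both terms land in $\delta_h\|u\|_{H^{k+1}(\Gamma)}h^{-1/2}\tn v\tn_{\partial\Gammahl}$.

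The delicate term is the bulk residual, integrated over the thin sliver $\Gammahl\setminus\Gamma$ of width $\sim\delta_h$. After Cauchy--Schwarz I would treat the test factor with the Poincar\'e estimate \eqref{eq:poincarethinbgammahl}, $\|v^l\|_{\Gammahl\setminus\Gamma}\lesssim\delta_h^{1/2}\|v^l\|_{\partial\Gammahl}+\delta_h\|\nablas v^l\|_{\Gammahl\setminus\Gamma}$, where $\delta_h^{1/2}\|v^l\|_{\partial\Gammahl}\lesssim\delta_h^{1/2}h^{1/2}\tn v\tn_{\partial\Gammahl}$. For the residual factor I would split on regularity. When $k\ge2$ the data satisfy $f+\Delta_\Gamma u\in H^1$ (using $f=-\Delta_\Gamma u$ on $\Gamma$, the extension stability \eqref{eq:extension-f} with $m=1$, and elliptic regularity \eqref{eq:ellipticregularity}) and this residual vanishes on $\Gamma$, hence on $\partial\Gamma$; applying \eqref{eq:poincarethinbgamma} with vanishing boundary trace gives $\|f+\Delta_\Gamma u\|_{\Gammahl\setminus\Gamma}\lesssim\delta_h\|\nablas(f+\Delta_\Gamma u)\|_{\Gammahl\setminus\Gamma}\lesssim\delta_h\|u\|_{H^{k+1}(\Gamma)}$, and combining with the test-factor bound gives the claim with room to spare. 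When $k=1$ only $f+\Delta_\Gamma u\in L^2$ is available, so I would instead use the $L^2$-stability $\|f+\Delta_\Gamma u\|_{\Gammahl\setminus\Gamma}\lesssim\|f\|_{L^2(\Gamma)}+\|\Delta_\Gamma u\|_{L^2(\Gamma)}\lesssim\|u\|_{H^2(\Gamma)}$ and extract the full power of $\delta_h$ from $v^l$ alone.

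The main obstacle is exactly this $k=1$ sliver term. Since the residual is only square-integrable, the entire factor of $\delta_h$ must come from the Poincar\'e bound on $v^l$, and the resulting boundary contribution $\delta_h^{1/2}h^{1/2}\tn v\tn_{\partial\Gammahl}$ must be reconciled with the target $\delta_h h^{-1/2}\tn v\tn_{\partial\Gammahl}$. This is where the geometric approximation order enters in an essential way: invoking $\delta_h\lesssim h^{k+1}$ from \eqref{eq:deltah} (so that $\delta_h\sim h^{2}$ for $k=1$) is what allows $\delta_h^{1/2}h^{1/2}$ to be absorbed into $\delta_h h^{-1/2}$ and the estimate to close, whereas for $k\ge2$ the extra $H^1$-regularity of the residual makes the argument robust and clean. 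I expect the careful bookkeeping of the powers of $h$ and $\delta_h$ in this low-order case to be the only genuine difficulty; the boundary terms and the high-order sliver term are routine Cauchy--Schwarz applications of Lemmas \ref{lem:boundaryerror} and \ref{lem:poincare}.
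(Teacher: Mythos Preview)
Your proposal is correct and follows essentially the same route as the paper: start from the consistency identity, bound the two boundary terms via Cauchy--Schwarz and Lemma~\ref{lem:boundaryerror}, and control the bulk sliver term by combining the Poincar\'e estimate~\eqref{eq:poincarethinbgammahl} on $v^l$ with a case split on the regularity of $f+\Delta_\Gamma u$ (using~\eqref{eq:poincarethinbgamma} to gain an extra $\delta_h$ when $k\ge2$, and relying on $\delta_h\lesssim h^2$ to close the $k=1$ case). The paper organizes the bookkeeping slightly differently---grouping the test-factor bound as $\|v^l\|_{\Gammahl\setminus\Gamma}\lesssim h\delta_h^{1/2}\bigl(\|\nablas v^l\|_\Gammahl+h^{-1/2}\tn v^l\tn_{\partial\Gammahl}\bigr)$ before multiplying---but the ingredients and the identification of the $k=1$ power-counting as the crux are the same.
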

\begin{rem}
Here (\ref{eq:quadah-l2}) will be used in the proof of the 
$L^2$ norm error estimate and (\ref{eq:quadlh-energy}) in the proof 
of the energy norm error estimate. As mentioned before we will use 
stronger control of the size of solution to the dual problem, which 
is used in the proof of the $L^2$ error estimate, close 
to the boundary to handle the additional factor of $h^{-1/2}$ 
multiplying $\tn v \tn_{\partial \Gammahl}$.
\end{rem}



\begin{proof} Starting from the identity (\ref{eq:consistencyidentity}) 
and using the triangle and Cauchy-Schwarz inequalities we obtain 
\begin{align}
\Big|a_h^l(u,v^l) - l_h^l(v^l)\Big| 
&\lesssim 
\|f+\Delta_\Gamma u\|_{\Gamma_h^l\setminus \Gamma} \|v^l\|_{\Gamma_h^l \setminus \Gamma} 
\\ \nonumber
&\qquad 
+ \|u\circ \tp - u\|_{\partial \Gamma_h^l} 
\|\nuh \cdot \nabla_\Gamma v^l\|_{\partial \Gamma_h^l}
\\ \nonumber
&\qquad 
+ h^{-1} \|u\circ \tp - u\|_{\partial \Gammahl} 
\|v^l\|_{\partial \Gamma_h^l}
\\
&\lesssim 
\underbrace{\|f+\Delta_\Gamma u\|_{\Gamma_h^l\setminus \Gamma}}_{I} 
\underbrace{\|v^l\|_{\Gamma_h^l \setminus \Gamma}}_{II} 
\\ \nonumber
&\qquad 
+ \underbrace{\|u\circ \tp - u\|_{\partial \Gamma_h^l}}_{III} 
h^{-1/2} \tn v^l \tn_{\partial 
\Gammahl} 
\\ \label{eq:consistency-aa}
&\lesssim 
\underbrace{h \delta_h^{m+1/2}}_{IV\lesssim \delta_h} 
\|u\|_{H^{m+2}(\Gamma)} 
\Big( \|\nablas v^l\|_\Gammahl 
+ h^{-1/2}\tn v^l \tn_{\partial \Gammahl}\Big) 
\\ \nonumber
&\qquad + \delta_h \|u \|_{H^2(\Gamma)}
h^{-1/2} \tn v^l\tn_{\partial \Gamma_h^l}
\end{align}
for all $v \in V_h$ \textcolor{black}{and $m=0,1$}. Here we used the following estimates.
\paragraph{Term $\bfI$.} For 
$m=0$ we have using the triangle inequality, followed by the 
stability (\ref{eq:extension-f}) and (\ref{eq:extension-stability}) 
of the extensions of $f$ and $u$,
\begin{align}
\|f+\Delta_\Gamma u\|_{\Gammahl\setminus \Gamma} 
&\lesssim 
\|f\|_{\Gammahl\setminus \Gamma}  
+ \| \Delta_\Gamma u\|_{\Gammahl\setminus \Gamma}
\lesssim 
\|f\|_{\Gamma} 
+ \|u\|_{H^2(\Gamma)}
\\
&\qquad \lesssim 
\|\Delta u\|_{\Gamma} 
+ \|u\|_{H^2(\Gamma)}
\lesssim 
 \|u\|_{H^2(\Gamma)}
\end{align}
where we finally replaced $f$ by $-\Delta u$ on $\Gamma$.

For $m=1$ we note that it follows from assumption (\ref{eq:extension-f}) 
that $f + \Delta u \in H^1(\Gammah \cup \Gamma)$ and 
$f+\Delta u = 0$ on $\Gamma$, which implies $f+\Delta u = 0$ on 
$\partial \Gamma$ since the trace is well defined. We may therefore 
apply the Poincar\'e estimate (\ref{eq:poincarethinbgamma}) to extract 
a power of $\delta_h$, as follows
\begin{align}
&\|f+\Delta_\Gamma u\|_{\Gamma_h^l\setminus \Gamma} 
\lesssim 
\delta_h \|f+\Delta_\Gamma u\|_{H^1(\Gamma \textcolor{black}{\setminus} \Gammah)}
\lesssim 
\delta_h ( \|f \|_{H^1(\Gamma \cup \Gammah)} 
+ \|\Delta u  \|_{H^1(\Gamma \cup \Gammah)} )
\\ 
&\qquad 
\lesssim 
\delta_h ( \|f \|_{H^1(\Gamma)} 
+ \| u  \|_{H^3(\Gamma)} )
\lesssim 
\delta_h ( \|\Delta u \|_{H^1(\Gamma)} 
+ \| u  \|_{H^3(\Gamma)} )
\lesssim \delta_h \|u \|_{H^{3}(\Gamma)}
\end{align} 
where again we used the triangle inequality, the stability (\ref{eq:extension-f}) and 
(\ref{eq:extension-stability}), and finally replaced $f$ by 
$-\Delta u$ on $\Gamma$. 

\paragraph{Term $\bfI \bfI$.} We used the Poincar\'e estimate (\ref{eq:poincarethinbgammahl}) as 
follows
\begin{align}
\| v^l \|^2_{\Gamma_h^l \setminus \Gamma} 
&\lesssim 
\delta_h^2 \|\nablas v^l \|^2_{\Gamma_h^l \setminus \Gamma}
+ 
\delta_h \| v^l \|^2_{\partial \Gammahl} 
\\
&\lesssim 
\delta_h^2 \|\nablas v^l \|^2_{\Gamma_h^l \setminus \Gamma}
+ 
h^2 \delta_h  h^{-2}\| v^l \|^2_{\partial \Gammahl}
\\
&\underbrace{(\delta_h^2 + h^2 \delta_h )}_{\lesssim h^2 \delta_h}
\Big(\|\nablas v^l \|^2_{\Gamma_h^l \setminus \Gamma}
+h^{-2}\| v^l \|^2_{\partial \Gammahl}\Big)
\\
&\lesssim h^2 \delta_h
\Big( \|\nablas v^l\|^2_\Gammahl 
+ h^{-1}\tn v^l \tn^2_{\partial \Gammahl}\Big)
\end{align}
\paragraph{Term $\bfI\bfI\bfI$.} We used the bound (\ref{eq:boundaryerror}) 
to estimate 
$\|u\circ \tp - u\|_{\partial \Gammahl}$. 

\paragraph{Term $\bfI\bfV$.} We note that since $\delta_h \lesssim h^2$ and 
$h \in (0,h_0]$ we have $h\delta_h^{m+1/2} \lesssim \delta_h$ 
for $m=0$ and $m=1$.

This concludes the proof of estimate 
(\ref{eq:consistencyestimate-l2}). Estimate (\ref{eq:consistencyestimate-energy}) follows by a direct 
estimate of the right hand side of (\ref{eq:consistencyestimate-l2}).
\end{proof}

\subsection{Estimates of the Quadrature Errors}
\begin{lem} The following estimates hold
\begin{equation}\label{eq:technical-a}
\|\muh B^{-1} B^{-T} - P_{\Gammah} \|_{L^\infty(\Gammah)} \lesssim h^{k+1}
\end{equation}
and 
\begin{equation}\label{eq:technical-b}
\|\muhh B^{-1} \nuh  - \nuhh \|_{L^\infty(\partial \Gammah)} \lesssim h^{k+1}
\end{equation}
\end{lem}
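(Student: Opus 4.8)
The plan is to prove both identities directly from the explicit representation of the differential of the closest point map, $dp = \Ps\Psh + \rho\mathcal{H}\Psh$ from \eqref{eq:dp}, exploiting that $\|\rho\|_{L^\infty(\Gammah)}\lesssim h^{k+1}$ by \eqref{eq:geomassum-c} while $\|\mathcal{H}\|_{L^\infty}\lesssim 1$. Thus $B$ equals the matrix of the leading part $A := \Ps|_{T_x(\Gammah)}\colon T_x(\Gammah)\to T_{p(x)}(\Gamma)$ up to an additive perturbation of size $O(h^{k+1})$. Since by \eqref{eq:Bbounds} the singular values of $B$ are bounded above and below, $\muh$ is bounded away from zero and all the algebraic operations below are Lipschitz near $B=A$; hence the $O(h^{k+1})$ perturbation of $B$ propagates to an $O(h^{k+1})$ perturbation of each quantity we form. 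It therefore suffices to analyse the leading part $A$ and show its contribution is $O(h^{2k})$, which is $\lesssim h^{k+1}$ because $k\geq 1$.

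For \eqref{eq:technical-a} I work with operators on $T_x(\Gammah)$ and use the $2\times2$ adjugate identity $\muh B^{-1}B^{-T}=\muh(B^TB)^{-1}=\tfrac1{\muh}\mathrm{adj}(B^TB)=\tfrac1{\muh}\bigl(\mathrm{tr}(B^TB)\,I-B^TB\bigr)$. The key computation is that, as a quadratic form on $T_x(\Gammah)$, $A^TA$ acts by $u\mapsto(\Ps u,u)=|u|^2-(n\cdot u)^2$; since $n_h\cdot u=0$ for $u\in T_x(\Gammah)$ we have $n\cdot u=(n\circ p-n_h)\cdot u$, so by \eqref{eq:geomassum-d} one gets $A^TA=I-R$ on $T_x(\Gammah)$ with a symmetric $R\geq 0$ satisfying $\|R\|\lesssim h^{2k}$. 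Substituting $B^TB=I-R+O(h^{k+1})$ into the adjugate identity and expanding $1/\muh=1-\tfrac12\mathrm{tr}(R)+O(h^{k+1})$ (which also recovers \eqref{eq:measurebound}), the trace contributions cancel to first order and one is left with $\muh B^{-1}B^{-T}-I=-\tfrac12\mathrm{tr}(R)\,I+R+O(h^{k+1})$, whose norm is $\lesssim h^{2k}+h^{k+1}\lesssim h^{k+1}$; identifying $I$ on $T_x(\Gammah)$ with $\Psh$ gives the claim.

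For \eqref{eq:technical-b} I choose orthonormal bases $\{\tth,\nuhh\}$ of $T_x(\Gammah)$ and $\{\tthl,\nuh\}$ of $T_{p(x)}(\Gamma)$ adapted to the boundary. Since $dp$ maps the curve tangent to the curve tangent with $dp\,\tth=\muhh\,\tthl$, the matrix of $B$ in these bases is upper triangular, $B=\left(\begin{smallmatrix}\muhh & a\\ 0 & b\end{smallmatrix}\right)$, whence $\muh=\muhh\,b$ and a direct computation gives $\muhh B^{-1}\nuh=-\tfrac{a}{b}\tth+\tfrac{\muhh}{b}\nuhh$. It remains to show $a=O(h^{k+1})$ and $b=1+O(h^{k+1})$. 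For $a=(dp\,\nuhh)\cdot\tthl$ I use $dp\,\nuhh=\Ps\nuhh+O(h^{k+1})=\nuhh-(n\cdot\nuhh)n+O(h^{k+1})$ and $\tthl=\tth-(n\cdot\tth)n+O(h^{k+1})$; expanding the inner product, the term $\nuhh\cdot\tth$ vanishes by orthogonality while the surviving term is the product $(n\cdot\nuhh)(n\cdot\tth)$, and since both factors are $O(h^k)$ by \eqref{eq:geomassum-d} (again because $n_h\perp\tth,\nuhh$), we get $a=O(h^{2k})+O(h^{k+1})=O(h^{k+1})$. For $b$ I avoid a direct computation and instead use $b=\muh/\muhh$ together with the determinant bounds \eqref{eq:measurebound} and \eqref{eq:measureboundarybound}, which give $b=(1+O(h^{k+1}))/(1+O(h^{k+1}))=1+O(h^{k+1})$. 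Plugging these into the formula for $\muhh B^{-1}\nuh$ yields $\muhh B^{-1}\nuh-\nuhh=O(h^{k+1})\tth+O(h^{k+1})\nuhh$, i.e. the desired bound.

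The main obstacle in both parts is the gap between the available first-order geometric accuracy $\|n\circ p-n_h\|\lesssim h^k$ and the sharper $h^{k+1}$ claimed: a naive estimate only yields $O(h^k)$. The crux is therefore to exhibit the cancellation of the first-order (linear in $h^k$) contributions --- in \eqref{eq:technical-a} through the balance between $\mathrm{tr}(B^TB)$ and $B^TB$ in the adjugate identity, and in \eqref{eq:technical-b} through the orthogonality relations $\nuhh\cdot\tth=0$ and $n_h\cdot\nuhh=n_h\cdot\tth=0$ --- so that the normal error enters only quadratically, and then to rely on the determinant bounds to control the remaining scalar factors to order $h^{k+1}$.
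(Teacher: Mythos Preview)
Your proof is correct, but both parts take a different route from the paper's.

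For \eqref{eq:technical-a}, the paper multiplies through by $B$ on both sides to transfer the estimate to $T_{p(x)}(\Gamma)$, reducing to $\|\Ps - \Ps\Psh\Ps\|_{L^\infty} = \|(\Ps n_h)\otimes(\Ps n_h)\|_{L^\infty}\lesssim h^{2k}$ --- a coordinate-free computation that works for hypersurfaces in any $\IR^n$. Your argument stays on $T_x(\Gammah)$ and exploits the $2\times 2$ Cayley--Hamilton/adjugate identity $\muh(B^TB)^{-1}=\muh^{-1}\bigl(\mathrm{tr}(B^TB)I-B^TB\bigr)$; this makes the cancellation of the $O(h^k)$ terms very transparent but is specific to two-dimensional tangent spaces (cf.\ the paper's Remark~2.2). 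Incidentally, your expansion of $1/\muh$ has the wrong sign in front of $\tfrac12\mathrm{tr}(R)$, but since $\mathrm{tr}(R)=O(h^{2k})$ this does not affect the bound.

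For \eqref{eq:technical-b}, the paper again pushes to the $\Gamma$ side, bounding $\|\muhh\nuh - B\nuhh\|$, and then computes $B\nuhh$ via cross products $\nuhh=\tth\times n_h$, $\nuh=\tthl\times n$ in $\IR^3$. Your approach is more algebraic: choosing bases adapted to the boundary makes $B$ upper triangular, so the whole estimate reduces to bounding the off-diagonal entry $a$ (where the quadratic cancellation $(n\cdot\nuhh)(n\cdot\tth)=O(h^{2k})$ appears) and the diagonal ratio $\muhh/b$, for which you cleverly recycle the already-established determinant bounds \eqref{eq:measurebound}--\eqref{eq:measureboundarybound} via $b=\muh/\muhh$. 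This avoids the cross-product calculus entirely and would in fact generalise more easily to higher-dimensional boundaries than the paper's argument, at the cost of invoking \eqref{eq:measureboundarybound} as a black box rather than deriving everything from \eqref{eq:dp} alone.
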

\begin{rem} Recall that $B(x):T_x(\Gammah) \rightarrow T_{p(x)}(\Gamma)$ 
and $B^T(x):T_{p(x)}(\Gamma)\rightarrow T_x(\Gammah)$ and therefore 
$B^{-1} B^{-T}:T_x(\Gammah) \rightarrow T_x(\Gammah)$. In (\ref{eq:technical-a}) we thus estimate the deviation of 
$\muh B^{-1} B^{-T}$ from the identity $P_\Gammah$ in $T_x(\Gammah)$.
\end{rem} 

\begin{proof} {\bf (\ref{eq:technical-a}):} We have the estimate  
\begin{align}
\| \muh B^{-1} B^{-T} - P_{\Gammah} \|_{L^\infty(\Gammah)} 
&\lesssim  
\| \muh \Ps - B P_{\Gammah} B^T\|_{L^\infty(\Gamma)} 
\\ \label{eq:BBT-a}
&\lesssim \| \Ps - \Ps \Psh \Ps\|_{L^\infty(\Gamma)} + h^{k+1} 
\end{align}
where we used the uniform boundedness of $B^{-1}$, 
the identity $\muh = 1 + O(h^{k+1})$, see (\ref{eq:measurebound}), 
and, the identity $B = \Ps + O(h^{k+1})$, see (\ref{eq:dp}).
Next we have the identity
\begin{align}
\Ps - \Ps \Psh \Ps &= \Ps (I-\Psh) \Ps 
= \Ps \Qsh \Ps = (\Ps n_h)\otimes (\Ps n_h)
\end{align}
and thus 
\begin{align}\label{eq:BBT-b}
\|\Ps - \Ps \Psh \Ps\|_{L^\infty(\Gamma)}  
&\lesssim \|\Ps n_h\|^2_{L^\infty(\Gamma)} 
\lesssim \|n_h - n\|^2_{L^\infty(\Gamma)}
\lesssim h^{2k}
\end{align}
which together with (\ref{eq:BBT-a}) concludes the proof.

\paragraph{(\ref{eq:technical-b}):} Using the uniform 
boundedness of $B^{-1}$ we obtain
\begin{equation}
\|\muhh B^{-1} \nuh  - \nuhh \|_{L^\infty(\Gammah)}
\lesssim \|\muhh \nuh  - B \nuhh \|_{L^\infty(\Gammahl)}
\end{equation}
Next let $\tth$ be the unit tangent vector to $\partial \Gammah$ and 
$\tthl$ the unit tangent vector to $\partial \Gammahl$, oriented in such 
a way that \textcolor{black}{
$\nuhh = \tth \times n_h$ and $\nuh = \tthl \times n$}. We then have 
\begin{align}
B\nuhh  &= (\Ps \Psh + \rho \mathcal{H})\nuhh 
\\
&=\Ps ( \tth \times n_h ) + O(h^{k+1})
\\
&= \Ps((\Ps+\Qs)\tth \times (\Ps+\Qs)n_h) + O(h^{k+1})
\\
&= \Ps(\Ps\tth \times \Qs n_h + \underbrace{\Qs \tth \times \Ps n_h}_{O(h^{2k})}) 
+ O(h^{k+1})
\\ \label{eq:conormalbound-a}
&= \Ps\tth \times \Qs n_h + O(h^{k+1})
\end{align}
where we used the fact that $\Ps \tth\times \Ps n_h$ is normal to 
$\Gamma_0$  and $\Qs \tth\times \Qs n_h = 0$ since the vectors 
are parallel. Using (\ref{eq:conormalbound-a}) and adding and
subtracting a suitable term we obtain
\begin{align}
\muhh \nuh  - B \nuhh & = \muhh \tthl \times n 
- \Ps\tth \times \Qs n_h 
+ O(h^{k+1})
\\
&=\underbrace{(\muhh \tthl - \Ps\tth)}_{I=O(h^{k+1})}\times n 
+ \Ps \tthl \times \underbrace{(n - \Qs n_h)}_{II=O(h^{2k})} + O(h^{k+1})
\\
&=O(h^{k+1})
\end{align}
Here we used the estimates:  (\emph{I}) We have 
$\muhh \tthl = B \tth$ and thus 
\begin{equation}
\muhh \tthl - \Ps \tth 
= (B - \Ps) \tth 
= \rho \mathcal{H} \tth = O(h^{k+1})
\end{equation}
(\emph{II}) $n - \Qs n_h = (1 - n\cdot n_h) \textcolor{black}{n} 
= 2^{-1} |n - n_h|^2 \textcolor{black}{n}  = O(h^{2k})$.
\end{proof}

\begin{lem}\label{lem:quadrature} The following estimates hold
\begin{align}\label{eq:quadah-l2}
\nonumber
&\Big|a_{\Gammahl}(v^l,w^l) - a_{\Gammah}(v,w) \Big| 
\\
&\qquad \lesssim h^{k+1} 
\Big(\| \nablash v \|_{\Gammah} 
+ h^{1/2} \tn v \tn_{\partial \Gammah} 
\Big)\Big(
\| \nablash w \|_{\Gammah} 
+ 
h^{-1/2} \tn w \tn_{\partial \Gammah}
\Big)
%
 \\ \label{eq:quadah-energy}
&\qquad \lesssim h^{k+1/2} \tn v \tn_{\Gammah} \tn w \tn_{\Gammah}
\qquad 
\forall v, w \in V_h
\end{align}
and
\begin{align}\label{eq:quadlh-l2}
\Big| l_{\Gammahl}(v^l) - l_{\Gammah}(v)\Big| 
&\lesssim h^{k+1} \Big( \|f\|_\Gamma  + \| g \|_{\partial \Gamma} \Big) 
\Big(\| \nablash v \|_\Gammah 
+ h^{-1/2}\tn v \tn_{\partial \textcolor{black}{\Gammah}} \Big)
\\ \label{eq:quadlh-energy}
&\lesssim h^{k+1/2} \Big( \|f\|_\Gamma + \|g\|_{\partial\Gamma} \Big) 
\tn v \tn_\Gammah 
\qquad \forall v\in V_h 
\end{align}
\end{lem}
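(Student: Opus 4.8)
The plan is to write each of the two differences as an interior contribution plus the three boundary contributions (the penalty term and the two symmetric conormal terms), transport every integral living on $\Gammahl$ or $\partial\Gammahl$ back to $\Gammah$ or $\partial\Gammah$ through the change of variables formulas, the gradient identity $\nablas v^l=B^{-T}\nablash v$ from (\ref{eq:gradientidentities}), and the conormal identity $\nuh\cdot\nablas v^l=B^{-1}\nuh\cdot\nablash v$ used in the verification of (\ref{eq:equivalence-energy-lift}); the resulting pointwise discrepancies are then measured with the two technical estimates (\ref{eq:technical-a}), (\ref{eq:technical-b}), and the determinant bound (\ref{eq:measureboundarybound}).

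For the interior Dirichlet term I would use $\nablas v^l=B^{-T}\nablash v$ and the volume change of variables to obtain
\begin{equation*}
(\nablas v^l,\nablas w^l)_{\Gammahl}=\int_{\Gammah}(\nablash v)^{T}\big(\muh B^{-1}B^{-T}\big)\nablash w\,d\Gammah,
\end{equation*}
and subtract $(\nablash v,\nablash w)_{\Gammah}=\int_{\Gammah}(\nablash v)^{T}\Psh\,\nablash w\,d\Gammah$, which is legitimate since $\nablash v$ and $\nablash w$ are tangential. The integrand difference is governed by (\ref{eq:technical-a}), giving a contribution $\lesssim h^{k+1}\|\nablash v\|_{\Gammah}\|\nablash w\|_{\Gammah}$. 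The penalty term is handled directly: pulling $(v^l,w^l)_{\partial\Gammahl}$ back to $\partial\Gammah$ leaves the discrepancy $\beta h^{-1}\int_{\partial\Gammah}vw(\muhh-1)\,d\Gammah$, which by (\ref{eq:measureboundarybound}) is $\lesssim h^{k+1}(h^{-1/2}\|v\|_{\partial\Gammah})(h^{-1/2}\|w\|_{\partial\Gammah})\lesssim h^{k+1}\tn v\tn_{\partial\Gammah}\tn w\tn_{\partial\Gammah}$.

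The two conormal terms are where the asymmetric weighting in (\ref{eq:quadah-l2}) originates. Transporting $(\nuh\cdot\nablas v^l,w^l)_{\partial\Gammahl}$ to $\partial\Gammah$ and subtracting $(\nuhh\cdot\nablash v,w)_{\partial\Gammah}$ produces $\int_{\partial\Gammah}\big((\muhh B^{-1}\nuh-\nuhh)\cdot\nablash v\big)w\,d\Gammah$, bounded through (\ref{eq:technical-b}) by $h^{k+1}\|\nablash v\|_{\partial\Gammah}\|w\|_{\partial\Gammah}$. The key point is that (\ref{eq:technical-b}) contracts against the \emph{full} gradient trace $\nablash v$, not merely its conormal component, so I would split $\nablash v$ into conormal and tangential parts exactly as in the verification of (\ref{eq:equivalence-energy-lift}) and use the inverse estimate $\|\tth\cdot\nablash v\|_{\partial\Gammah}\lesssim h^{-1}\|v\|_{\partial\Gammah}$ to get $\|\nablash v\|_{\partial\Gammah}\lesssim h^{-1/2}\tn v\tn_{\partial\Gammah}$. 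Combined with $\|w\|_{\partial\Gammah}\le h^{1/2}\tn w\tn_{\partial\Gammah}$ this yields $\lesssim h^{k+1}\tn v\tn_{\partial\Gammah}\tn w\tn_{\partial\Gammah}$; the symmetric term with $v$ and $w$ interchanged is identical. Collecting the interior, penalty, and conormal estimates and absorbing them into $(\|\nablash v\|_{\Gammah}+h^{1/2}\tn v\tn_{\partial\Gammah})(\|\nablash w\|_{\Gammah}+h^{-1/2}\tn w\tn_{\partial\Gammah})$ gives (\ref{eq:quadah-l2}), and (\ref{eq:quadah-energy}) then follows from the crude bounds $\|\nablash v\|_{\Gammah}+h^{1/2}\tn v\tn_{\partial\Gammah}\lesssim\tn v\tn_{\Gammah}$ and $\|\nablash w\|_{\Gammah}+h^{-1/2}\tn w\tn_{\partial\Gammah}\lesssim h^{-1/2}\tn w\tn_{\Gammah}$.

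The linear form is treated in the same way, using $\tp\circ p=\pb$ from (\ref{def:ptilde}) so that $g\circ\tp$ transports to $g\circ\pb$ on $\partial\Gammah$. The $g$-boundary terms then reproduce exactly the conormal and penalty computations above, with $\|g\circ\pb\|_{\partial\Gammah}\lesssim\|g\|_{\partial\Gamma}$ replacing the trace of $w$. The one extra ingredient is the source term $(f,v^l)_{\Gammahl}-(f\circ p,v)_{\Gammah}=\int_{\Gammah}(f\circ p)v(\muh-1)\,d\Gammah$, bounded by (\ref{eq:measurebound}) by $h^{k+1}\|f\|_{\Gamma}\|v\|_{\Gammah}$; to fit the factor $\|\nablash v\|_{\Gammah}+h^{-1/2}\tn v\tn_{\partial\Gammah}$ I would invoke the Poincar\'e--Friedrichs inequality $\|v\|_{\Gammah}\lesssim\|\nablash v\|_{\Gammah}+\|v\|_{\partial\Gammah}$ (uniform in $h$ since $\Gammah$ approximates the fixed $\Gamma$) together with $\|v\|_{\partial\Gammah}\le h^{1/2}\tn v\tn_{\partial\Gammah}$. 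This gives (\ref{eq:quadlh-l2}), and (\ref{eq:quadlh-energy}) follows as before. I expect the \emph{main obstacle} to be bookkeeping the half-powers of $h$ correctly: recognizing that each conormal term necessarily pairs a gradient trace (costing $h^{-1/2}$ through the inverse estimate) with a value trace (contributing the complementary $h^{1/2}$), which is precisely the source of the asymmetry later exploited in the $L^2$ duality argument.
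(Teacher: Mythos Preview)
Your proposal is correct and follows essentially the same route as the paper: the same four-term decomposition (interior, two conormal terms, penalty), the same pull-back to $\Gammah$ and $\partial\Gammah$ via the change-of-variables formulas and the gradient identity (\ref{eq:gradientidentities}), and the same use of (\ref{eq:technical-a}), (\ref{eq:technical-b}), (\ref{eq:measurebound}), (\ref{eq:measureboundarybound}) together with the Poincar\'e inequality on $\Gammah$ for the source term. The only minor variation is in the conormal terms: the paper bounds $\|\nablash v\|_{\partial\Gammah}\lesssim h^{-1/2}\|\nablash v\|_{\Gammah}$ by an elementwise inverse trace inequality, whereas you route through the boundary norm via the conormal--tangential split (as in the verification of (\ref{eq:equivalence-energy-lift})); both choices yield factors that absorb into the asymmetric product in (\ref{eq:quadah-l2}).
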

\begin{rem} In fact the estimate (\ref{eq:quadah-energy}) holds also 
with the factor $h^{k+1}$, which is easily seen in the proof below. However, (\ref{eq:quadah-energy}) is only used in the proof of the 
energy norm error estimate which is of order 
$h^k$ so there is no loss of order. We have chosen this form since it 
is analogous with the estimates of the right hand side 
(\ref{eq:quadlh-l2})-\textcolor{black}{(\ref{eq:quadlh-energy})}. 
\end{rem}
\begin{rem} We note that the estimates in Lemma \ref{lem:quadrature} 
have similar form as the estimates in Lemma \ref{lem:consistency}, 
which are adjusted to fit the $L^2$ and energy norm estimates. 
\end{rem}

\begin{proof}{\bf (\ref{eq:quadah-l2})-(\ref{eq:quadah-energy}):} 
Starting from the definitions of the forms (\ref{eq:ahh}) and 
(\ref{eq:ah}) we obtain
\begin{align}
a_{\Gammahl}(v^l,w^l) - a_{\Gammah}(v,w)
&= (\nablas v^l,\nablas w^l)_{\Gamma_h^l} 
              - (\nablash v,\nablash w)_{\Gamma_h}
\\ \nonumber
&\qquad - (\nuh \cdot \nablas v^l,w^l)_{\partial \Gamma_h^l} 
+(\nuhh \cdot \textcolor{black}{\nablash} v,w)_{\partial \Gamma_h}
\\ \nonumber
&\qquad 
 -  (v^l, \nuh \cdot \nablas w^l)_{\partial \Gamma_h^l}
+  (v, \nuhh \cdot \textcolor{black}{\nablash} w)_{\partial \Gamma_h}
\\ \nonumber
&\qquad + \beta h^{-1}\Big( (v^l,w^l)_{\partial \textcolor{black}{\Gamma_h^l}} 
- (v,w)_{\partial \Gamma_h} \Big)
\\
&= I + II + III + III
\end{align}
\paragraph{Term $\bfI$.} We have the estimates
\begin{align}
|I| & = \Big|( B^{-T} \nablash v, B^{-T} \nablash w \textcolor{black}{\muh} )_{\Gammah} 
- (\nablash v, \nablash w)_{\Gammah}\Big|
\\
&=\Big|( (\muh B^{-1} B^{-T} - P_{\Gammah}) \nablash v, \nablash w  )_{\Gammah}\Big|
\\
&\lesssim 
h^{k+1} \|\nablash v \|_{\Gammah} \|\nablash w \|_{\Gammah}
\end{align}
where we used the estimate (\ref{eq:technical-a}). 

\paragraph{Terms $\bfI\bfI$ and $\bfI\bfI\bfI$.}
Terms $II$ and $III$ have the same form and may be estimated as follows 
\begin{align}
|II| &= \Big|(\nuh \cdot \nablas v^l,w^l)_{\partial \Gamma_h^l} 
\textcolor{black}{-}(\nuhh \cdot \textcolor{black}{\nablash} v,w)_{\partial \Gamma_h}\Big|
\\
&=\Big|(\nuh \cdot B^{-T} \nablash v,w \muhh )_{\textcolor{black}{\partial \Gamma_h}} \textcolor{black}{-}
  (\nuhh \cdot \textcolor{black}{\nablash},w)_{\partial \Gamma_h}\Big|
\\
&=\Big|((\muhh B^{-1} \nuh - \nuhh) \textcolor{black}{\cdot} \nablash v,w )_{\textcolor{black}{\partial \Gamma_h}} \Big|
\\
&\leq \| \muhh B^{-1} \nuh - \nuhh \|_{L^\infty(\partial \Gammah)} 
\| \nablash v \|_{\partial \Gammah} \| w \|_{\partial \Gammah}
\\
&\lesssim h^{k+1} h^{1/2}\tn v \tn_{\textcolor{black}{\Gammah}} 
h^{-1/2}\tn w \tn_{\partial \Gammah}  
\end{align}
where we used (\ref{eq:technical-b}) and the inverse estimate 
\begin{equation}
h \| \nablash v \|^2_{\textcolor{black}{\partial}\Gammah} 
\lesssim \|\nablash v \|^2_{\mcK_h(\Gammah)}
\lesssim \| \nablash v \|^2_{\Gammah}
\end{equation}
for all $v \in V_h$. Thus we conclude that 
\begin{equation}
|II| + |III| 
\lesssim h^{k+1} h^{1/2}\tn v \tn_{\textcolor{black}{\Gammah}} 
h^{-1/2}\tn w \tn_{\partial \Gammah}
\end{equation}

\paragraph{Term $\bfI\bfV$.} We have 
\begin{align}
|IV| &= \beta h^{-1}\Big| (v^l,w^l)_{\partial \Gammahl} 
- (v,w)_{\partial\Gammah}\Big|
\\
&=  \beta h^{-1}\Big|((\muhh - 1) v,w)_{\partial \Gammah}\Big|
\\
& \lesssim h^{-1}\|\muhh - 1\|_{L^\infty(\partial \Gammah\textcolor{black}{)}} 
\|v\|_{\partial \Gammah} \|w\|_{\partial \Gammah}
\\
&\lesssim h^{k+1} h^{1/2}\tn v \tn_{\partial\Gammah} 
h^{-1/2}\tn w \tn_{\partial \Gammah}
\end{align}
\textcolor{black}{Estimate} (\ref{eq:quadah-energy}) \textcolor{black}{follows} by a direct estimate 
of the right hand side of (\ref{eq:quadah-l2}).

\paragraph{(\ref{eq:quadlh-l2}) and (\ref{eq:quadlh-energy}):} We have
\begin{align}
\Big|l_\Gammahl(w^l) - l_\Gammah(w)\Big|
&= \Big|(f,w^l)_{\Gammahl} - (f\circ p_\Gamma,w)_\Gammah 
\\ \nonumber
&\qquad - (g \circ \tp ,\nuh \cdot \nablas w^l)_{\partial \Gammahl} 
+ (g \circ p_{\partial \Gamma} ,\nuhh \cdot \nablash w)_{\partial \Gammah} 
\\ \nonumber
&\qquad + \beta h^{-1}(g\circ \tp, w^l)_{\partial \Gammahl}                        
 -  \beta h^{-1}(g\circ p_{\partial \Gamma}, w)_{\partial \Gammah}                    
 \Big|
 \\ 
 &\leq \Big|(\textcolor{black}{\muh} -1) f\circ p_\Gamma,w)_\Gammah\Big| 
\\ \nonumber
&\qquad + \Big| (g \circ p_{\partial \Gamma} , 
(\textcolor{black}{\muhh B^{-1}} \nuh  - \nuhh) 
\cdot \nablash w)_{\partial \Gammah}\Big| 
\\ \nonumber
&\qquad + \beta h^{-1}\Big|((\textcolor{black}{\muhh}-1)g\circ p_{\partial \Gamma}, w)_{\partial \Gammah}                    
 \Big|
 \\
  &\lesssim 
  h^{k+1} \|f \|_\Gamma \|w\|_\Gammah 
  + h^{k+1} \|g \|_{\partial \Gamma} \|\nablash w\|_{\textcolor{black}{\partial}\Gammah}
  + h^{k}\|g \|_{\partial \Gamma} \|w\|_{\partial \Gammah}                  
\end{align}
\textcolor{black}{where we used (\ref{eq:measurebound}), (\ref{eq:technical-b}) and (\ref{eq:measureboundarybound}).} Next using the Poincar\'e estimate 
\begin{equation}
\| w \|_\Gammah 
\lesssim \|\nablash w \|_\Gammah + \| w \|_{\partial \Gammah}
\lesssim \|\nablash w \|_\Gammah + h^{1/2}\tn w \tn_{\partial \Gammah}
\end{equation}
we obtain
\begin{align}
\Big|l_\Gammahl(w^l) - l_\Gammah(w)\Big|
&\lesssim   h^{k+1} \|f \|_\Gamma \|w\|_\Gammah 
\\ \nonumber
&\qquad 
  + h^{k+1} \|g \|_{\partial \Gamma} h^{-1/2}\tn  w \tn_{\textcolor{black}{
  \partial \Gammah}} 
   + \textcolor{black}{h^{k} \|g \|_{\partial \Gamma} h^{1/2}\tn  w \tn_{\partial
  \Gammah}}
\\
&\lesssim   h^{k+1} \|f \|_\Gamma 
\Big( \|\nablash w \|_\Gammah + h^{1/2}\tn w \tn_{\partial \Gammah} \Big)
\\ \nonumber
&\qquad  + h^{k+1} \|g \|_{\partial \Gamma} h^{-1/2}\tn w \tn_{\textcolor{black}{ \partial \Gammah}}
\\
&\lesssim 
h^{k+1}\Big( \|f \|_\Gamma + \|g \|_{\partial \Gamma}\Big)
\Big( \|\nablash w \|_\Gammah + h^{-1/2}\tn w \tn_{\partial \Gammah} \Big)
\\
&\lesssim 
h^{k+1/2}\Big( \|f \|_\Gamma + \|g \|_{\partial \Gamma}\Big)
\tn w \tn_\Gammah 
\end{align}
which are the desired estimates.
\end{proof}

\subsection{Error Estimates}
With the abstract Strang Lemma~\ref{lem:strang} and the estimates for
the interpolation, quadrature and consistency error, we are now
prepared to prove the main a priori error estimates.
\begin{thm} \label{thm:energy}
With $u$ the solution of (\ref{eq:poissoninterior}-\ref{eq:poissonbc}) and $u_h$ the solution of (\ref{eq:fem}) the 
following estimate holds 
\begin{equation}\label{eq:energynormerrorest}
\tn u - u_h^l \tn_{\Gammahl} 
\lesssim h^k  \Big(\| u \|_{H^{k+1}(\Gamma)} 
+ \|f\|_{\Gamma} + \|g\|_{\partial \Gamma} \Big) 
\end{equation}
\end{thm}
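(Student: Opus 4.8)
The plan is to apply the Strang Lemma~\ref{lem:strang} and then bound, one by one, the four terms on its right-hand side, using the interpolation estimate \eqref{eq:interpolation-energy}, the quadrature estimates of Lemma~\ref{lem:quadrature}, and the consistency estimate of Lemma~\ref{lem:consistency}. For the first (interpolation) term I would use the norm equivalence \eqref{eq:equivalence-energy-lift} to pass from $\Gammahl$ to $\Gammah$ and then invoke \eqref{eq:interpolation-energy} directly, obtaining $\tn u - (\pi_h u^e)^l\tn_{\Gammahl} \sim \tn u^e - \pi_h u^e\tn_{\Gammah} \lesssim h^k\|u\|_{H^{k+1}(\Gamma)}$. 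For the third term, the quadrature error in the right-hand side, I would apply \eqref{eq:quadlh-energy}, which already carries the factor $\tn v\tn_{\Gammah}$, so that after dividing by $\tn v\tn_{\Gammah}$ and taking the supremum one is left with $\lesssim h^{k+1/2}(\|f\|_{\Gamma} + \|g\|_{\partial\Gamma}) \lesssim h^k(\|f\|_{\Gamma} + \|g\|_{\partial\Gamma})$. For the fourth (consistency) term I would apply \eqref{eq:consistencyestimate-energy}, which gives $\lesssim h^{-1/2}\delta_h\|u\|_{H^{k+1}(\Gamma)}\tn v\tn_{\Gammah}$; since $\delta_h\lesssim h^{k+1}$ by \eqref{eq:deltah}, the prefactor $h^{-1/2}\delta_h\lesssim h^{k+1/2}\lesssim h^k$ once $\tn v\tn_{\Gammah}$ is cancelled.

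The remaining and most delicate contribution is the second term, the quadrature error in the bilinear form, $\sup_v |a_{\Gammah}(\pi_h u^e,v) - a_{\Gammahl}((\pi_h u^e)^l,v^l)|/\tn v\tn_{\Gammah}$. Here I would use the refined estimate \eqref{eq:quadah-l2} with $w=v$, so that the $v$-dependent factor is controlled by $\|\nablash v\|_{\Gammah} + h^{-1/2}\tn v\tn_{\partial\Gammah}\lesssim h^{-1/2}\tn v\tn_{\Gammah}$. The subtle point is the factor $\|\nablash\pi_h u^e\|_{\Gammah} + h^{1/2}\tn\pi_h u^e\tn_{\partial\Gammah}$: one must notice that $\tn\pi_h u^e\tn_{\Gammah}$ is \emph{not} uniformly bounded, since the Nitsche penalty term $h^{-1}\|\pi_h u^e\|^2_{\partial\Gammah}$ in the energy norm sees the nonzero boundary value $\pi_h u^e\approx g$ on $\partial\Gammah$ and therefore behaves like $h^{-1}\|g\|^2_{\partial\Gamma}$. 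What rescues the estimate is precisely the $h^{1/2}$ weighting of the boundary norm in \eqref{eq:quadah-l2}: the term $h^{1/2}\tn\pi_h u^e\tn_{\partial\Gammah}$ contains $h^{1/2}\cdot h^{-1/2}\|\pi_h u^e\|_{\partial\Gammah} = \|\pi_h u^e\|_{\partial\Gammah}$, so the dangerous $h^{-1/2}$ is exactly cancelled and this factor stays bounded by $\|u\|_{H^{k+1}(\Gamma)} + \|g\|_{\partial\Gamma}$.

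To make that last bound rigorous I would estimate $\|\nablash\pi_h u^e\|_{\Gammah}\lesssim \|u\|_{H^1(\Gamma)} + h^k\|u\|_{H^{k+1}(\Gamma)}$ by interpolation stability together with \eqref{eq:interpolation-energy}, absorb the conormal-derivative boundary term into $\|\nablash\pi_h u^e\|_{\Gammah}$ via the inverse trace inequality, and control $\|\pi_h u^e\|_{\partial\Gammah}$ by $\|u^e\|_{\partial\Gammah}$ plus an interpolation remainder, where $\|u^e\|_{\partial\Gammah}\lesssim \|g\|_{\partial\Gamma} + h^{k+1}\|u\|_{H^2(\Gamma)}$ follows from Lemma~\ref{lem:boundaryerror} and the fact that $u\circ\tp=g$ on $\partial\Gammahl$. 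Altogether the second term is bounded by $h^{k+1}(\|u\|_{H^{k+1}(\Gamma)}+\|g\|_{\partial\Gamma})h^{-1/2} = h^{k+1/2}(\|u\|_{H^{k+1}(\Gamma)}+\|g\|_{\partial\Gamma})\lesssim h^k(\|u\|_{H^{k+1}(\Gamma)}+\|g\|_{\partial\Gamma})$. Summing the four contributions then yields \eqref{eq:energynormerrorest}. The main obstacle, as indicated, is the apparent unboundedness of $\tn\pi_h u^e\tn_{\Gammah}$ caused by the penalty term; recognizing that the extra half power of $h$ in the geometric quadrature estimate precisely compensates this growth is the crux of the argument.
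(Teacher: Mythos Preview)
Your proposal is correct and follows the same overall route as the paper: apply the Strang Lemma~\ref{lem:strang} and bound the four terms using \eqref{eq:interpolation-energy}, Lemma~\ref{lem:quadrature}, and Lemma~\ref{lem:consistency}. The only noteworthy difference is your treatment of the second (bilinear-form quadrature) term. You invoke the refined asymmetric estimate \eqref{eq:quadah-l2} and then work to show that $\|\nablash\pi_h u^e\|_{\Gammah}+h^{1/2}\tn\pi_h u^e\tn_{\partial\Gammah}$ is uniformly bounded, bringing in Lemma~\ref{lem:boundaryerror} and the boundary datum $g$. The paper takes a shorter path: it applies the symmetric estimate \eqref{eq:quadah-energy} directly, obtaining $h^{k+1/2}\tn\pi_h u^e\tn_{\Gammah}$, and then simply accepts the crude bound $\tn\pi_h u^e\tn_{\Gammah}\lesssim\tn\pi_h u^e-u^e\tn_{\Gammah}+\tn u^e\tn_{\Gammah}\lesssim h^k\|u\|_{H^{k+1}(\Gamma)}+h^{-1/2}\|u\|_{H^2(\Gamma)}$ via a trace inequality. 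The $h^{-1/2}$ growth you flag as ``the main obstacle'' is absorbed by the extra $h^{1/2}$ already present in the $h^{k+1/2}$ prefactor, yielding $h^k\|u\|_{H^2(\Gamma)}\lesssim h^k\|u\|_{H^{k+1}(\Gamma)}$. So the compensation you identify is exactly what the paper uses, just packaged differently; your detour through $\|g\|_{\partial\Gamma}$ is valid but not needed here.
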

\begin{proof} Starting from the Strang Lemma and using the 
interpolation estimate (\ref{eq:interpolation-energy}), the 
quadrature error estimates (\ref{eq:quadah-energy}) and 
(\ref{eq:quadlh-energy}), and the consistency error estimate 
(\ref{eq:consistencyestimate-energy}), we obtain
\begin{align}
\tn u - u_h^l \tn_{\Gammahl} 
& \lesssim h^k \| u \|_{H^{k+1}(\Gamma)}
+ h^{k+1/2} \tn \pi_h u^e \tn_{\Gammah}
+ h^{k+1/2} \Big( \|f\|_{\Gamma} + \| g \|_{\partial \Gamma} \Big)
\\ \nonumber
&\qquad + h^{-1/2}\delta_h \| u \|_{H^2(\Gamma)}
\\ \label{eq:thmenergy-a}
& \lesssim h^k \| u \|_{H^{k+1}(\Gamma)}
+ h^{k+1/2} \Big( \|f\|_{\Gamma} + \| g \|_{\partial \Gamma} \Big)
\\ \nonumber
&\qquad + h^{k+1/2} \| u \|_{H^2(\Gamma)}
\end{align}
Here, in (\ref{eq:thmenergy-a}), we used the estimate
\begin{align}
\tn \pi_h u^e \tn_{\Gammah} 
&\lesssim 
\tn \pi_h u^e - u^e \tn_{\Gammah}
+ 
\tn u^e \tn_{\Gammah}
\\ \label{eq:thmenergy-b}
&\lesssim 
h^k\|u\|_{H^{k+1}(\Gamma)} 
+ 
h^{-1/2} \| u \|_{H^2(\Gamma)} 
\end{align}
where, in (\ref{eq:thmenergy-b}), we used the interpolation 
estimate (\ref{eq:interpolation-energy}) to estimate the 
first term and a trace inequality to estimate the second 
term, and finally the inequality
$h^{-1/2} \delta_h \lesssim h^{k+1/2}$. Thus the proof is 
complete since $k\geq 1$ and $h\in(0,h_0]$.
\end{proof}

\begin{thm} \label{thm:L2}
With $u$ the solution of (\ref{eq:poissoninterior}-\ref{eq:poissonbc}) and $u_h$ the solution of (\ref{eq:fem}) the 
following estimate holds 
\begin{equation}
\| u - u_h^l \|_{\Gammahl}  
\lesssim h^{k+1}  \Big(\| u \|_{H^{k+1}(\Gamma)} 
+ \|f\|_{\Gamma} + \|g\|_{\partial \Gamma}\Big)
\end{equation}
\end{thm}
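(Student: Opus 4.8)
The plan is to run an Aubin--Nitsche duality argument, refined so as to exploit that the dual solution vanishes on $\partial\Gamma$ and is therefore small on $\partial\Gammahl$. Write $e = u - u_h^l$ and let $\phi$ solve the dual problem $-\Delta_\Gamma\phi = e$ in $\Gamma$, $\phi = 0$ on $\partial\Gamma$; since $\Gamma$ and $\partial\Gamma$ are smooth, elliptic regularity \eqref{eq:ellipticregularity} gives $\|\phi\|_{H^2(\Gamma)}\lesssim\|e\|_\Gamma$. First I would split $\|e\|^2_{\Gammahl} = \|e\|^2_{\Gammahl\cap\Gamma} + \|e\|^2_{\Gammahl\setminus\Gamma}$ and dispose of the thin-strip contribution $\|e\|_{\Gammahl\setminus\Gamma}$ using the Poincar\'e estimate \eqref{eq:poincarethinbgammahl} together with the already established energy estimate (Theorem \ref{thm:energy}), since the strip has width $\lesssim\delta_h\lesssim h^{k+1}$ while $\tn e\tn_{\Gammahl}\lesssim h^k$. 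For the main part I would apply Green's formula on $\Gammahl$ to the symmetric form $a_\Gammahl(e,\phi)$, which (using $-\Delta_\Gamma\phi = e$ on $\Gammahl\cap\Gamma$) gives
\begin{equation*}
\|e\|^2_{\Gammahl\cap\Gamma} = a_\Gammahl(e,\phi) - (e,-\Delta_\Gamma\phi)_{\Gammahl\setminus\Gamma} + (\nuh\cdot\nablas e,\phi)_{\partial\Gammahl} - \beta h^{-1}(e,\phi)_{\partial\Gammahl}.
\end{equation*}
The two boundary remainders and the thin-strip volume remainder are all of higher order than $h^{k+1}\|e\|_\Gamma$: they are controlled by the smallness of $\phi$ near $\partial\Gamma$ (see the refined bound below) and by \eqref{eq:poincarethinbgammahl}, so the task reduces to bounding $a_\Gammahl(e,\phi)$.

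The key preparatory step is a refined bound on the dual data near the discrete boundary. Since $\tp$ maps $\partial\Gammahl$ onto $\partial\Gamma$ and $\phi$ vanishes there, we have $\phi\circ\tp = 0$ on $\partial\Gammahl$, so Lemma \ref{lem:boundaryerror} applied to $\phi$ yields $\|\phi\|_{\partial\Gammahl} = \|\phi\circ\tp - \phi\|_{\partial\Gammahl}\lesssim\delta_h\|\phi\|_{H^2(\Gamma)}$. Combining this with the interpolation estimate \eqref{eq:interpol-basic}, the element trace inequality \eqref{eq:traceelement}, and $\delta_h\lesssim h^{k+1}$, I would deduce
\begin{equation*}
\tn \pi_h \phi^e \tn_{\partial \Gammah} \sim \tn \pi_h^l \phi \tn_{\partial \Gammahl} \lesssim h^{1/2}\|\phi\|_{H^2(\Gamma)} \lesssim h^{1/2}\|e\|_\Gamma.
\end{equation*}
This extra factor $h^{1/2}$ is precisely what is needed to absorb the factors $h^{-1/2}$ that multiply the boundary norms in the consistency and quadrature estimates.

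Finally I would split $a_\Gammahl(e,\phi) = a_\Gammahl(e,\phi - \pi_h^l\phi) + a_\Gammahl(e,\pi_h^l\phi)$. The first term is bounded by continuity of $a_\Gammahl$ and interpolation by $\tn e\tn_{\Gammahl}\tn\phi - \pi_h^l\phi\tn_{\Gammahl}\lesssim h^k\cdot h\|\phi\|_{H^2(\Gamma)}\lesssim h^{k+1}\|e\|_\Gamma$. For the second I would write $a_\Gammahl(e,\pi_h^l\phi) = a_\Gammahl(u,\pi_h^l\phi) - a_\Gammahl(u_h^l,\pi_h^l\phi)$ and insert, on the one hand, the consistency identity (Lemma \ref{lem:identityconsistency}) to replace $a_\Gammahl(u,\pi_h^l\phi)$ by $l_\Gammahl(\pi_h^l\phi)$ up to the consistency error of Lemma \ref{lem:consistency}, and, on the other hand, the quadrature estimate \eqref{eq:quadah-l2} to replace $a_\Gammahl(u_h^l,\pi_h^l\phi)$ by $a_\Gammah(u_h,\pi_h\phi^e)$, which equals $l_\Gammah(\pi_h\phi^e)$ by the discrete equation \eqref{eq:fem} (Galerkin orthogonality). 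What remains is the right-hand-side quadrature difference $l_\Gammahl(\pi_h^l\phi) - l_\Gammah(\pi_h\phi^e)$, bounded by \eqref{eq:quadlh-l2}. In every one of these three error contributions the dangerous factor is $h^{-1/2}\tn\pi_h\phi^e\tn_{\partial\Gammah}$, which by the refined bound is $\lesssim\|e\|_\Gamma$ rather than merely $O(h^{-1/2})$; together with $\|\nablash\pi_h\phi^e\|_{\Gammah}\lesssim\|\phi\|_{H^2(\Gamma)}\lesssim\|e\|_\Gamma$ this yields $|a_\Gammahl(e,\pi_h^l\phi)|\lesssim h^{k+1}(\|u\|_{H^{k+1}(\Gamma)} + \|f\|_\Gamma + \|g\|_{\partial\Gamma})\|e\|_\Gamma$. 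Collecting all bounds gives $\|e\|^2_{\Gamma}\lesssim h^{k+1}(\|u\|_{H^{k+1}(\Gamma)} + \|f\|_\Gamma + \|g\|_{\partial\Gamma})\|e\|_\Gamma$ up to the controlled thin-strip terms, and dividing by $\|e\|_\Gamma$ together with the reduction $\|e\|_{\Gammahl}\lesssim\|e\|_\Gamma + \|e\|_{\Gammahl\setminus\Gamma}$ completes the proof.

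The main obstacle, and the reason a naive duality argument fails, is precisely these boundary terms: the geometric and boundary-approximation errors carry inverse powers $h^{-1/2}$ on $\partial\Gammahl$, so without extra smallness they would only deliver the suboptimal rate $h^{k+1/2}$ already seen in the energy estimate. The crux is therefore to establish and exploit $\tn\pi_h^l\phi\tn_{\partial\Gammahl}\lesssim h^{1/2}\|e\|_\Gamma$, which rests on the homogeneous Dirichlet condition of the dual problem through Lemma \ref{lem:boundaryerror}; balancing the powers of $h$ and $\delta_h$ in this step, and in the several thin-strip remainders, is where the care is needed.
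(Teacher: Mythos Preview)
Your proposal is correct and follows essentially the same refined Aubin--Nitsche argument as the paper: the same error representation on $\Gammahl$ (your initial splitting into $\Gammahl\cap\Gamma$ and $\Gammahl\setminus\Gamma$ is just a cosmetic reorganization of the paper's Term $I$), the same treatment of $a_\Gammahl(e,\pi_h^l\phi)$ via consistency plus the two quadrature errors, and the same crucial use of $\phi|_{\partial\Gamma}=0$ together with Lemma~\ref{lem:boundaryerror} to obtain $h^{-1/2}\tn\pi_h^l\phi\tn_{\partial\Gammahl}\lesssim\|\phi\|_{H^2(\Gamma)}$. The only minor technical point is that the paper takes the dual right-hand side to be $\psi=e$ on $\Gammahl$ and $\psi=0$ on $\Gamma\setminus\Gammahl$ (so that $\psi$ is well defined on $\Gamma$ without needing to extend $u_h^l$), which avoids the slight ambiguity in your ``$-\Delta_\Gamma\phi=e$ in $\Gamma$'' and the closing step ``divide by $\|e\|_\Gamma$''; adopting this choice makes all factors $\|e\|_{\Gammahl}$ and the division is clean.
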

\begin{proof} Let $\phi \in H^1_0(\Gamma)$ be the solution 
to the dual problem
\begin{equation}\label{eq:dual}
a(v,\phi) = (v,\psi), \qquad v \in H^1_0(\Gamma)
\end{equation}
where $\psi=e = u -u_h^l$ on $\Gamma_h^l$ and $\psi=0$ on 
$\Gamma \setminus \Gamma_h^l$, and extend $\phi$ using 
the extension operator to $U_{\delta_0}(\Gamma)\cap \Gamma_0$. 
Then we have the stability estimate
\begin{equation}\label{eq:dualstab}
\|\phi \|_{H^2(\Gamma \cup \Gammahl\textcolor{black}{)}}
\lesssim 
\| \phi \|_{H^2(\Gamma)} 
\lesssim 
\| \psi \|_{\Gammahl}
= \|e \|_{\Gammahl}
\end{equation}
where the first inequality follows from the stability (\ref{eq:extension-stability}) of the extension of $\phi$ and the second is the elliptic regularity of the solution to the dual problem.

We obtain the following representation formula 
for the error  
\begin{align}
\|e\|^2_{\Gammahl} &= (e,\psi+\Delta \phi)_{\Gammahl} 
- (e,\Delta \phi)_{\Gammahl}
\\
&=(e,\psi +\Delta \phi)_{\Gammahl \setminus \Gamma}
+ (\nabla e,\nabla \phi )_{\Gammahl} 
- (e,\nuh \cdot \nabla \phi)_{\partial \Gammahl}
\\
&=\underbrace{(e,\psi +\Delta \phi)_{\Gammahl \setminus \Gamma}}_{I}
+ \underbrace{a_{\Gammahl}(e,\phi)}_{II} 
+ \underbrace{(\nuh \cdot \nablas e, \phi)_{\partial \Gammahl}
- \beta h^{-1} (e,\phi)_{\partial \Gammahl}}_{III}
\end{align}

\paragraph{Term $\bfI$.} We have the estimates
\begin{align}
|I|&=|(e,\psi+\Delta \phi)_{\Gammahl \setminus \Gamma}|
\\
&\lesssim \|e\|_{\Gammahl \setminus \Gamma} 
\|\psi+\Delta \phi\|_{\Gammahl \setminus \Gamma}
\\
&\lesssim 
\Big(\delta_h^2 \|\nablas e \|^2_{\Gammahl \setminus \Gamma}
+ \delta_h \|e\|^2_{\partial \Gammahl} \Big)^{1/2}
\Big(\| \psi \|_{\Gammahl \setminus \Gamma} +  
\| \Delta \phi \|_{\Gammahl \setminus \Gamma} \Big)
\\
&\lesssim 
\Big( (\delta_h^2 + h \delta_h) \tn e \tn_\Gammahl^2 \Big)^{1/2}
\Big( \|e\|_{\Gammahl \setminus \Gamma} + \| \phi \|_{H^2(\Gamma)} \Big)
\\ \label{est:L2I}
&\lesssim 
\underbrace{(h^{-2} \delta_h + h^{-1}\delta_h)^{1/2}}_{\lesssim 1} h 
\tn e \tn_\Gammahl \| e \|_{\Gammahl}
\end{align}
Here we used the Poincar\'e estimate (\ref{eq:poincarethinbgammahl}) 
together with the definition of the energy norm to conclude that 
$\|e\|_{\Gammahl\setminus \Gamma} \lesssim h \tn e \tn_\Gammahl$, 
the stability (\ref{eq:dualstab}) of the dual problem to conclude 
that $\|\psi + \Delta \phi \|_{\Gammahl\setminus \Gamma} 
\lesssim \| e \|_{\Gammahl}$, 
and finally the fact $\delta_h \lesssim h^{k+1}$.

\paragraph{Term $\bfI\bfI$.} Adding and subtracting an interpolant 
we obtain
\begin{align}
|II| &= |a_\Gammahl(e,\phi - \pi_h^l \phi) + a_\Gammahl(e,\pi_h^l \phi)|
\\
&\lesssim 
\tn e \tn_\Gammahl \tn \phi - \pi_h^l \phi \tn_\Gammahl 
+
|a_\Gammahl(e,\pi_h^l \phi)|
\\
&\lesssim 
h \tn e \tn_\Gammahl \|\phi\|_{H^2(\Gamma)}
+ |a_\Gammahl(e,\pi_h^l \phi)|
\\ \label{proof:L2a}
&\lesssim 
h \tn e \tn_\Gammahl \| e\|_{\Gammahl}
+ |a_\Gammahl(e,\pi_h\phi)|
\end{align}
For the second term on the right hand side we first note that using
\textcolor{black}{Lemma \ref{lem:consistency} and Lemma \ref{lem:quadrature}}
we have the estimates
\begin{align}
a_\Gammahl(e,\pi_h \phi) & = a_\Gammahl(u,\pi_h^l \phi) - 
a_\Gammahl(u_h^l,\pi_h^l \phi)
\\
& = a_\Gammahl(u,\pi_h^l \phi) - l_\Gammahl(\pi_h^l \phi)
\\ \nonumber  
&\qquad + l_\Gammahl(\pi_h^l \phi) - l_\Gammah(\pi_h \phi)
\\ \nonumber
&\qquad + a_\Gammah(u_h,\pi_h \phi) - a_\Gammahl(u_h^l,\pi_h \phi)
\\
& = \delta_h \| u \|_{H^{k+1}(\Gamma)}
\Big(\| \nablas \pi_h^l \phi \|_{\Gammahl} 
+ h^{-1/2} \tn \pi_h^l \phi \tn_{\partial \Gammahl} \Big)
\\ \nonumber  
&\qquad + h^{k+1} \Big( \| f \|_{\Gamma} + \|g \|_{\partial \Gamma} \Big)
\Big(\| \textcolor{black}{\nablash} \pi_h \phi \|_{\textcolor{black}{\Gammah}} 
+ h^{-1/2} \tn \pi_h \phi \tn_{\textcolor{black}{\partial \Gammah}} \Big)
\\ \nonumber
&\qquad + h^{k+1} \Big(\| \nablash u_h \|_\Gammah 
+ h^{1/2}\tn u_h \tn_{\partial \Gammah} \Big)
\Big(
\| \nablash \pi_h \phi \|_\Gammah
+
h^{-1/2}  \tn \pi_h \phi \tn_{\partial \Gammah} \Big)
\\
&\lesssim \delta_h \| u \|_{H^{k+1}(\Gamma)} \|e\|_{\Gammahl}
+ h^{k+1} \Big( \| f \|_{\Gamma} + \|g \|_{\partial \Gamma} \Big)
\|e\|_{\Gammahl}
\end{align}
Here we used the estimate
\begin{align}\label{uhstab1}
\| \nablash u_h \|_\Gammah 
+ h^{1/2}\tn u_h \tn_{\partial \Gammah} 
&\sim
\| \nablas u_h^l \|_\Gammahl 
+ h^{1/2}\tn u_h^l \tn_{\partial \Gammahl}
\\
&\leq
\| \nablas (u_h^l - u) \|_\Gammahl 
+ h^{1/2}\tn (u_h^l - u) \tn_{\partial \Gammahl}
\\ \nonumber
&\qquad + 
\| \nablas u \|_\Gammahl 
+ h^{1/2}\tn u \tn_{\partial \Gammahl}
\\
&\lesssim h^k \| u \|_{H^{k+1}(\Gamma)} + \| u \|_{H^2(\Gamma)} \label{uhstab3}
\end{align}
where we added and subtracted the exact solution, used the triangle 
inequality and the energy norm error estimate (\ref{eq:energynormerrorest}) 
and finally a trace inequality to estimate the last term.
For the dual problem we obtain
\[ \textcolor{black}{
\|\nablash \pi_h \phi \|_{\textcolor{black}{\Gammah}} 
+ h^{-1/2} \tn \pi_h \phi \tn_{\partial \Gammah} \sim \| \nablas \pi_h^l \phi  \|_\Gammahl 
+ h^{-1/2}\tn \pi_h^l \phi \tn_{\partial \Gammahl}}.
\]
\textcolor{black}{
The first term of the right hand side is handled as in
(\ref{uhstab1})-(\ref{uhstab3}) and the second is bounded as follows}
\begin{align}
h^{-1}\tn \pi_h^l \phi \tn^2_{\partial \Gammahl}
&\lesssim 
h^{-1}\tn \pi_h^l \phi - \phi  \tn^2_{\partial \Gammahl}
+ 
h^{-1}\tn \phi \tn^2_{\partial \Gammahl}
\\
&\lesssim 
h \|\phi \|\textcolor{black}{^2}_{H^2(\Gammahl)}
+ 
\| \nuh \cdot \nablas \phi \|^2_{\partial \Gammahl}
+ h^{-2} \| \phi \|^2_{\partial \Gammahl}
\\
&\lesssim 
h \|\phi \|\textcolor{black}{^2}_{H^2(\Gammahl)}
+ 
\| \phi \|^2_{H^2(\Gammahl)}
+ h^{-2}\delta_h^2 \| \phi \|^2_{H^2(\Gamma)}
\\
&\lesssim \underbrace{(h + 1 + h^{-2}\delta_h^2)}_{\lesssim 1} 
\|\phi \|\textcolor{black}{^2}_{H^2(\Gamma)}
\end{align}
where we added and subtracted the exact solution, used the 
interpolation error estimate (\ref{eq:interpolation-energy}) for the 
first term on the right hand side, a trace inequality for the second 
term, the fact that $\phi=0$ on $\Gamma$ together with (\ref{eq:boundaryerror}) for the third term, and finally stability 
of the extension operator (\ref{eq:extension-stability}).
Thus we conclude that 
\begin{equation}
\|\nablas \pi_h^l \phi \|_\Gammahl + h^{-1/2}\tn \pi_h^l \phi \tn_{\partial \Gammahl}
\lesssim \|\phi\|_{H^2(\Gamma)} 
\lesssim \|e\|_{\Gammahl}
\end{equation}
%
%

%
\paragraph{Term $\bfI\bfI\bfI$.} Using the Cauchy-Schwarz inequality 
we get
\begin{align}
|III| &\leq h\tn e \tn_\Gammahl h^{-3/2}\|\phi\|_{\partial \Gammahl} 
\lesssim h\tn e \tn_\Gammahl h^{-3/2}\delta_h \|\phi\|_{H^2(\Gamma)}
\lesssim h\tn e \tn_\Gammahl \| e \|_{\Gammah}
\end{align}

\begin{rem} Our results directly extends to the case of a Neumann or 
Robin condition 
\begin{equation}
\nu \cdot \nablas u = g_N - \kappa u
\end{equation}
where $\kappa\geq 0$ on a part of the boundary. Essentially we need to modify the quadrature term estimates to account 
for the terms involved in the weak statement of the Robin. 
These terms are very similar to the terms involved in the Nitsche formulation for the Dirichlet problem and may be estimated in the same way.
\end{rem}


\section{Numerical Examples}

We consider the Laplace-Beltrami problem on a torus with 
a part removed. To express points on the torus surface we use
toroidal coordinates $\{\theta,\phi\}$ defined such that the 
corresponding Cartesian coordinates are given by
\begin{align}
x = (R+r \cos(\theta)) \cos(\phi), \quad
y = (R+r \cos(\theta)) \sin(\phi), \quad 
z = r \sin(\theta)
\end{align}
with constants $R=1$ and $r=0.4$. The boundary 
$\partial\Gamma$ is defined by the curves
\begin{align}\label{eq:numerics-boundary-a}
\phi_1(\theta) = 0.2 \cos (N_1 \theta)
\qquad
\text{and}
\qquad
\phi_2(\theta) = 0.2 \cos (N_2 \theta) + 0.6 (2R\pi)
\end{align}
where we choose $N_1=4$ and $N_2=3$. In turn the domain $\Gamma$ 
is given by
\begin{align}
\Gamma = \left\{ \theta,\phi \ : \ 0 \leq \theta < 2\pi \, , \ \phi_1 \leq \phi \leq \phi_2 \right\}
\end{align}
We manufacture a problem with a known analytic solution by 
prescribing the solution
\begin{align}
u = \cos(3\phi+5\theta)\sin(2\theta)
\label{eq:uansatz}
\end{align}
and compute the corresponding load $f$ by using the identity
$f=-\Delta_\Gamma u$. The non-homogenous Dirichlet boundary conditions
on $\partial\Gamma$ are directly given by $u|_{\partial\Gamma}$.  Note
that \eqref{eq:uansatz} is smooth and defined on the complete torus so
clearly the stability estimates \eqref{eq:extension-f} and
\eqref{eq:extension-stability} for $f$ and $u$ both hold.

\paragraph{Geometry Discretization $\boldsymbol{\Gamma}_{\boldsymbol h}$.}
We construct higher order ($k>1$) geometry approximations $\Gammah$
from an initial piecewise linear mesh ($k=1$) by adding nodes for
higher order Lagrange interpolation through linear interpolation
between the facet vertices.  All mesh nodes are moved to the exact
surface by the closest point map $p(x)$ and then the boundary is
corrected such that the nodes on the discrete boundary
$\partial\Gamma_h$ coincide with the exact boundary
$\partial\Gamma$. A naive approach for the correction is to just move
nodes on the boundary of the mesh to the exact boundary. For our model
problem this is done through the map
$\widetilde{p}_{\partial\Gamma}:\partial\Gammahl
\rightarrow \partial\Gamma$ given by
$\widetilde{p}_{\partial\Gamma}=\phi_i$.  This may however give
isoparametric mappings with bad quality or negative Jacobians in some
elements, especially in coarser meshes and higher order interpolations
where the element must be significantly deformed to match the
boundary.  We therefore use a slightly more refined procedure where
interior nodes are placed inside the element according to a quadratic
map aligned to the boundary, rather than using linear interpolation
over the facet.  In Figure~\ref{fig:mesh} a coarse mesh for the model
problem using $k=3$ interpolation is presented.

\paragraph{Numerical Study.} The numerical solution for the model problem with $k=3$ and $h=1/4$ is visualized in Figure~\ref{fig:solution}. We choose the Nitsche penalty parameter $\beta=10^4$. This large value was chosen in order to achieve the same size of the error as when strongly enforcing the Dirichlet boundary conditions and using $k=4$.

The results for the convergence studies in energy norm and $L^2$ norm are presented in Figure~\ref{fig:convenergy} and Figure~\ref{fig:convL2}. These indicate convergence rates of $O(h^k)$ in energy norm and $O(h^{k+1})$ in $L^2$ norm which by norm equivalence is in agreement with
Theorem~\ref{thm:energy} and Theorem~\ref{thm:L2}, respectively.
For coarse meshes we note some inconsistency of the results in energy norm for higher order interpolations. We attribute this effect to large derivatives of the mappings used to make the element fit the boundary which may arise in some elements for coarse meshes that are large in 
comparison to the variation of the boundary. When the boundary is better resolved we retain the proper convergence rates. Note also that the Jacobian of the mapping is involved in the computation of the gradient 
which explains that we see this behavior in the energy norm but not in 
the $L^2$ norm.

In the special case $\Gammahl = \Gamma$, such as the simplified model problem, obtained by taking parameters $N_1=N_2=0$ in the boundary description (\ref{eq:numerics-boundary-a}), illustrated by the mesh in Figure~\ref{fig:mesh_simplified}, no correction of boundary nodes onto 
$\partial\Gamma$ is needed. In that case the energy error aligns perfectly with the reference lines also for coarse meshes and higher order geometry approximations, see Figure~\ref{fig:convenergy_simplified}.

\begin{figure}
\centering
\includegraphics[width=0.60\linewidth]{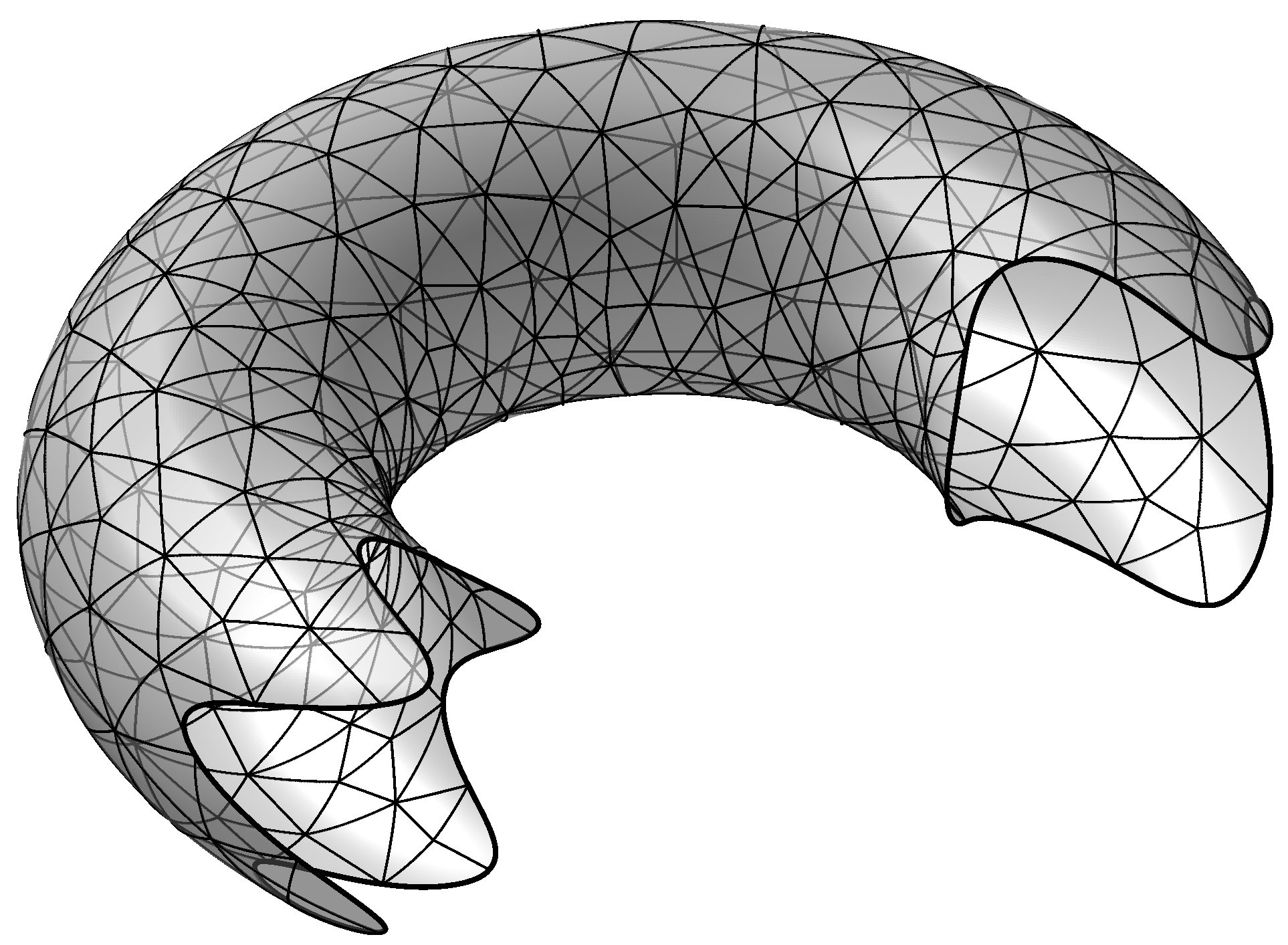}
\caption{Mesh for the model problem using geometric interpolation order $k=3$ and meshsize $h=1/4$.
}
\label{fig:mesh}
\end{figure}

\begin{figure}
\centering
\includegraphics[width=0.60\linewidth]{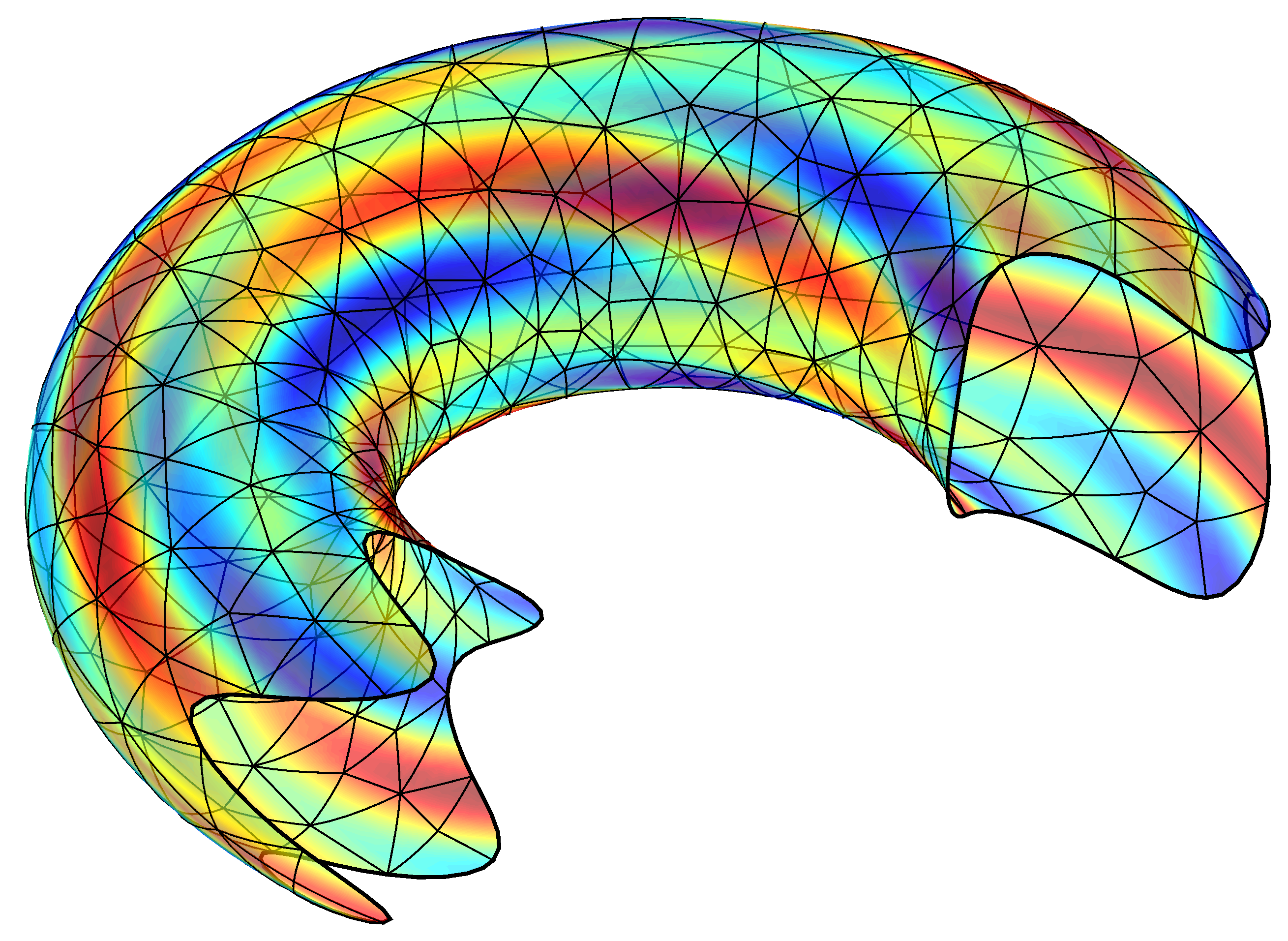}
\caption{Numerical solution for the model problem using $k=3$ and meshsize $h=1/4$.
}
\label{fig:solution}
\end{figure}

\begin{figure}
\centering
\includegraphics[width=0.70\linewidth]{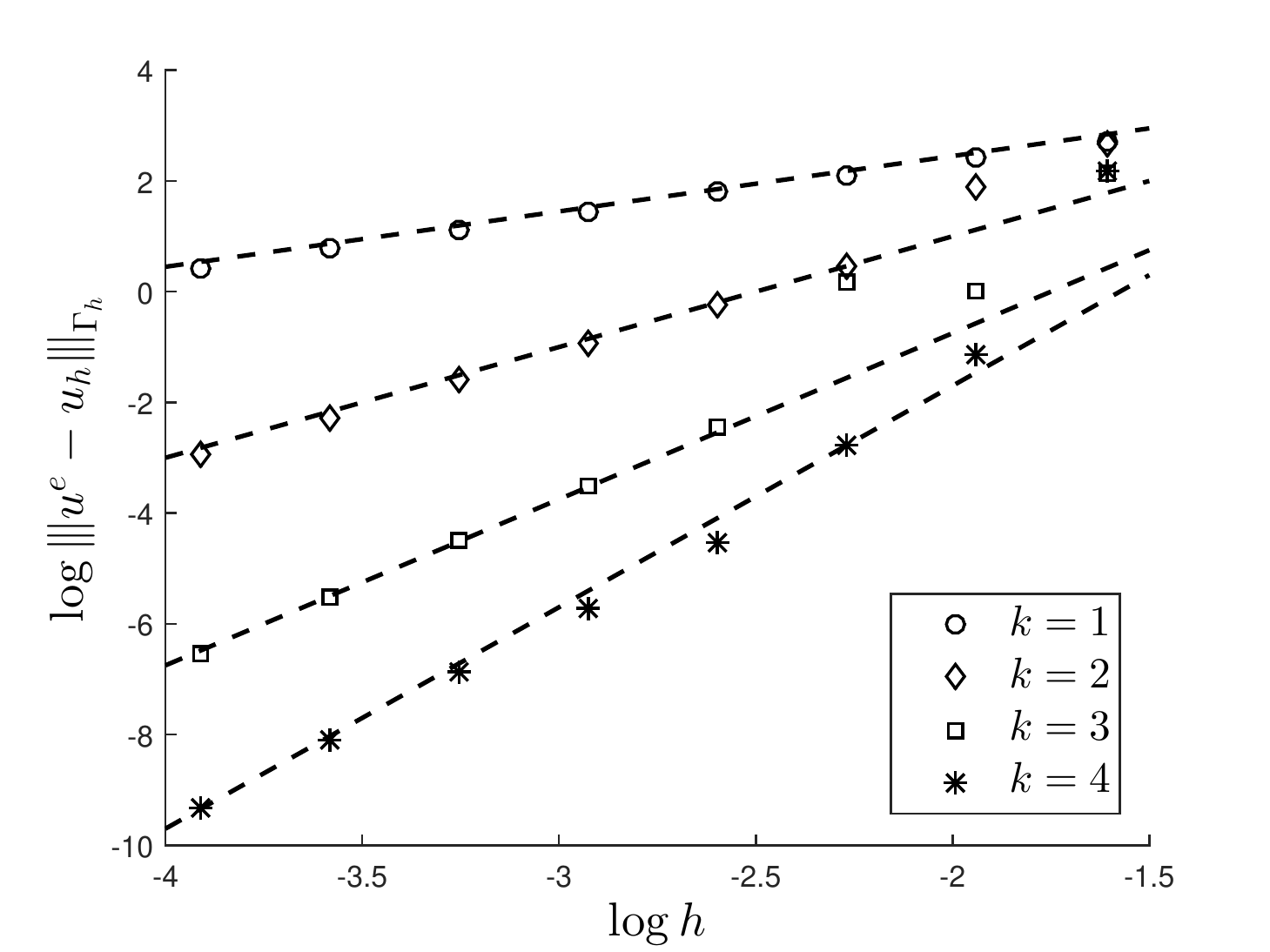}
\caption{Convergence study of the model problem in energy norm with reference lines proportional to $h^{k}$. Note the instability in convergence rate for coarse meshes and higher order geometry approximation.
}
\label{fig:convenergy}
\end{figure}

\begin{figure}
\centering
\includegraphics[width=0.70\linewidth]{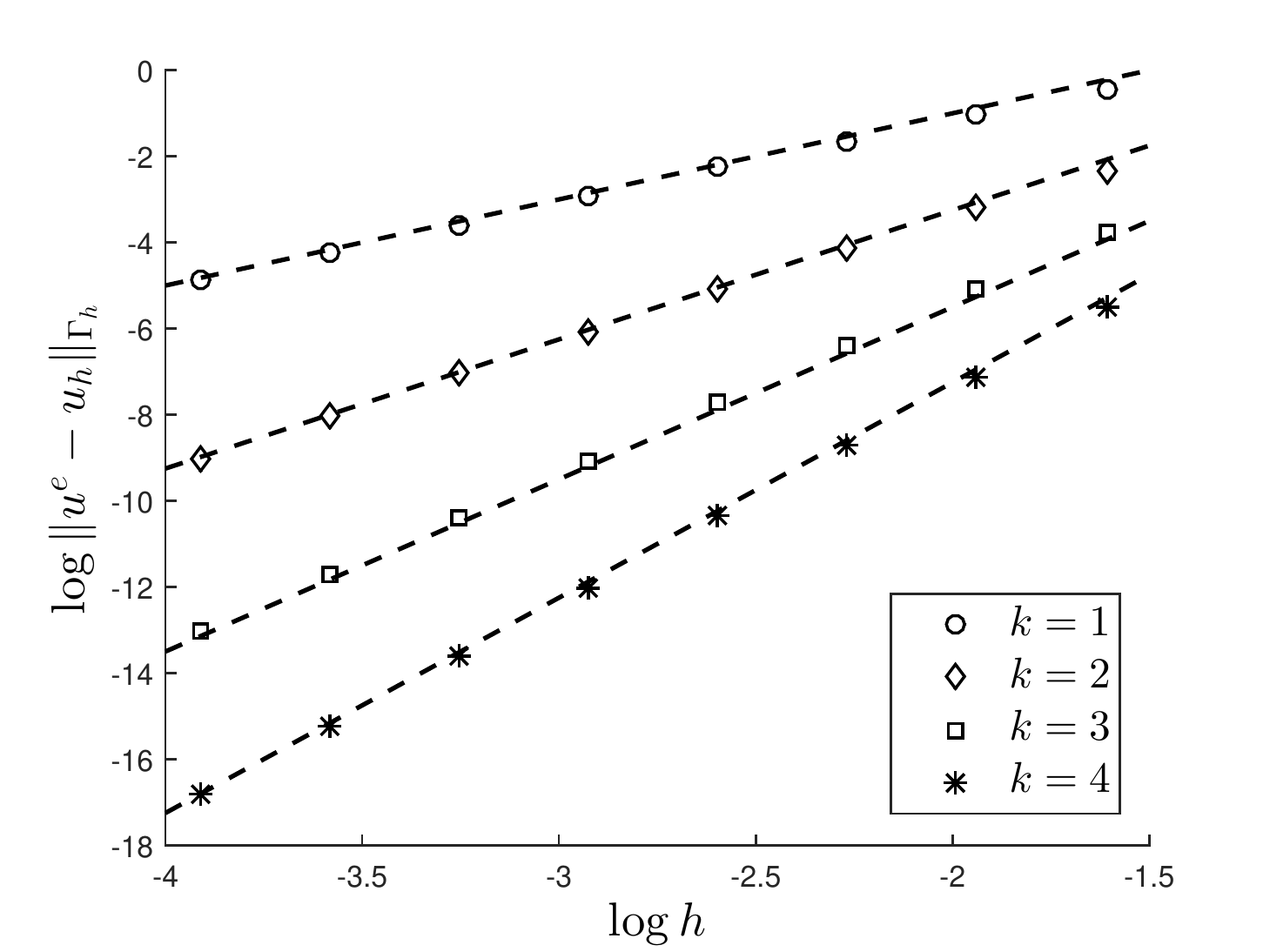}
\caption{Convergence study of the model problem in $L^2$ norm with reference lines proportional to $h^{k+1}$.
}
\label{fig:convL2}
\end{figure}

\begin{figure}
\centering
\includegraphics[width=0.60\linewidth]{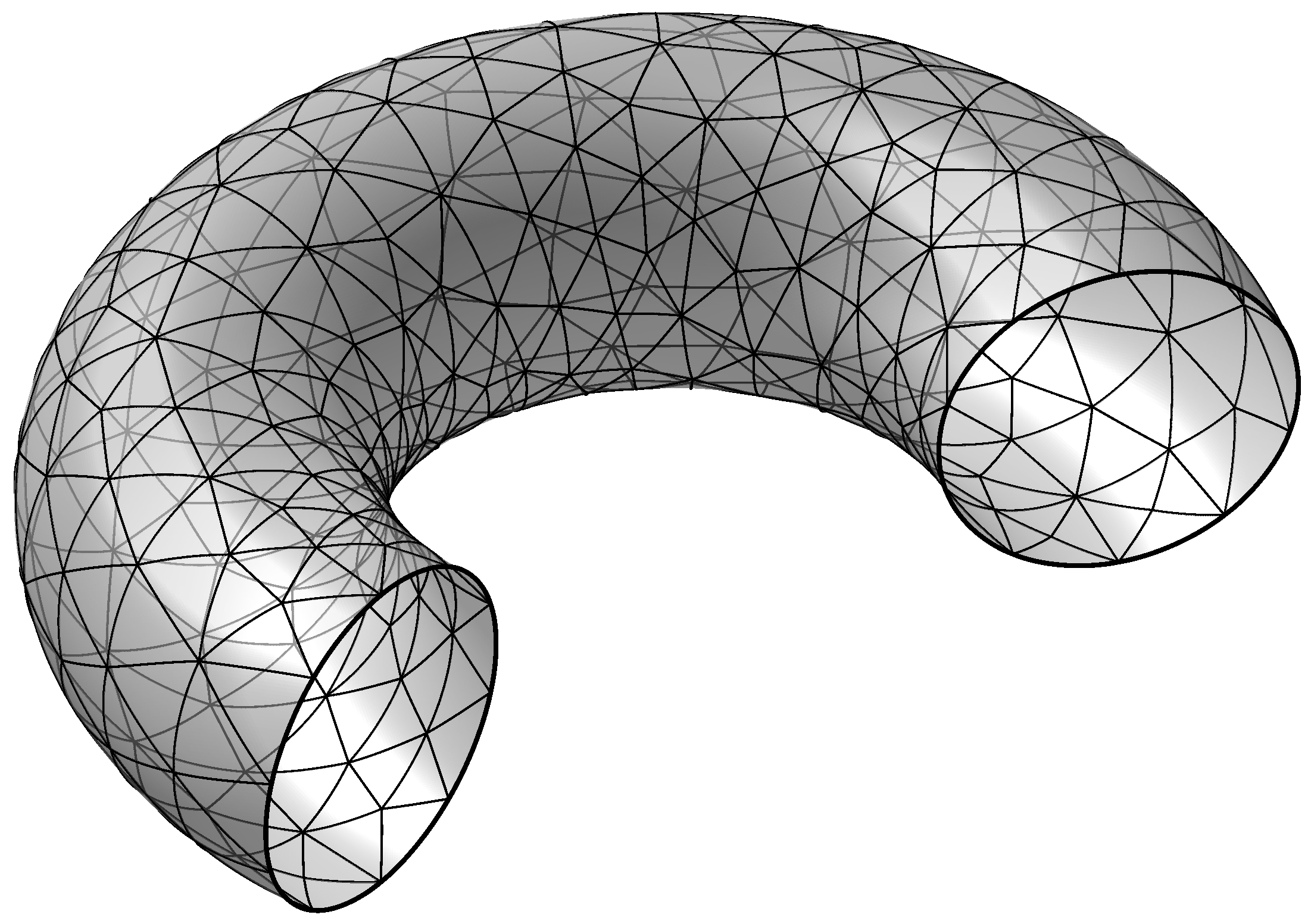}
\caption{Mesh for a simplified version of the model problem ($N_1=N_2=0$) using geometric interpolation order $k=3$ and meshsize $h=1/4$. For this problem $\Gammahl=\Gamma$, i.e. there is no boundary correction.
}
\label{fig:mesh_simplified}
\end{figure}

\begin{figure}
\centering
\includegraphics[width=0.70\linewidth]{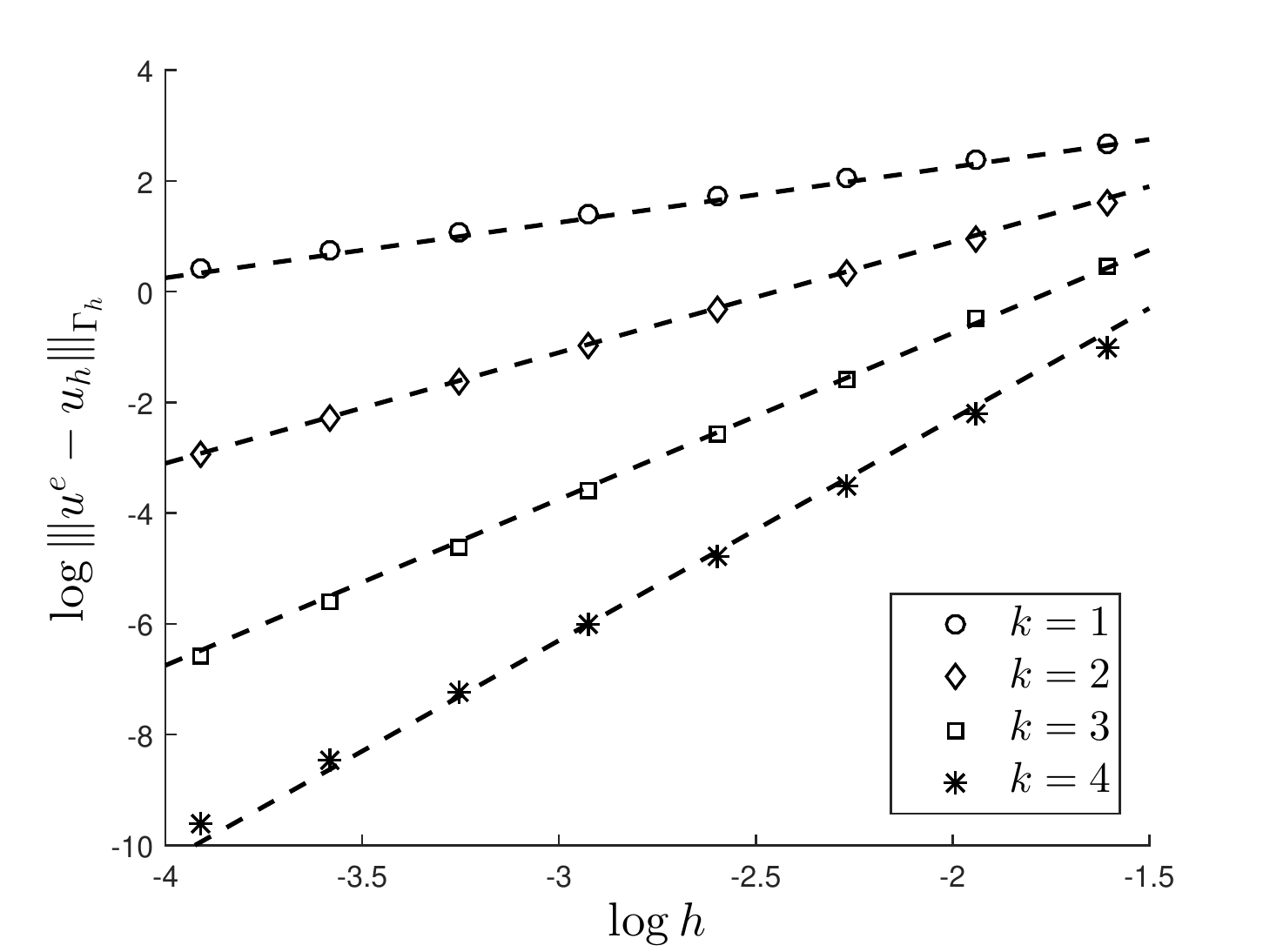}
\caption{Convergence study for a simplified version of the model problem ($N_1=N_2=0$) in energy norm with reference lines proportional to $h^{k}$. Note that there is no instability in convergence rate for coarse meshes.
}
\label{fig:convenergy_simplified}
\end{figure}


\paragraph{Acknowledgement.} This research was supported in part by EPSRC, UK, Grant No. EP/J002313/1, the Swedish Foundation for Strategic Research Grant No.\ AM13-0029, the Swedish Research Council Grants Nos.\ 2011-4992, 
2013-4708, and Swedish strategic research programme eSSENCE.

\bibliographystyle{plain}
\bibliography{BDT}
\end{proof}

%
%

\end{document}